\newtheorem{thm}{Theorem}[section]
\newtheorem{lem}[thm]{Lemma}
\newtheorem{prop}[thm]{Proposition}
\newtheorem{coro}[thm]{Corollary}
\newtheorem{defn}[thm]{Definition}
\theoremstyle{definition}
\newtheorem{rem}[thm]{Remark}
\newcommand{\mz}{\mathbb{Z}}
\newcommand{\mc}{\mathbb{C}}
\newcommand{\mr}{\mathbb{R}}
\newcommand{\mq}{\mathbb{Q}}
\numberwithin{equation}{section}
\begin{document}
\title{\bf On the topological decomposition of the hypersurfaces in projective toric manifolds 
 }

\author{\bf Wei WANG \\{\small School of Mathematical Sciences, Fudan University }\\
{\small Shanghai, P. R. China, 200433}}
\date{\small December, 2011}
\maketitle
\begin{abstract}
\small  In this paper, we want to discuss the topology of the non-singular hypersurface $Y^{n}$ with complex dimension $n$ in a projective toric manifold $X^{n+1}$. When $n$ is odd, our main results are a decomposition of $Y^{n}\cong Y'\sharp \ s(S^n \times S^n) $ as a connected sum of $s$ copies of $S^n \times S^n$ with a differential manifold $Y'$ such that $b_n (Y')=0$ or $2$. When $n$ is even and the degree of $Y$ in $X$ is big enough, we find that $Y$ also admits such a decomposition $Y'\sharp \ s(S^n \times S^n)$, where $Y'$ satisfy $b_n(Y')-|sign(Y')|=b_n(X)\pm sign(H^n(X))$, where $sign(H^n(X))$ is the signature of a certain bilinear form defined on $H^n(X,\mz)$.
\end{abstract}
\footnotetext[1]{\textbf{MSC(2000): 57 R 19, 57 R 65}}
\footnotetext[2]{\textbf{Keywords:  Removing handles, Connected sum, Hypersurfaces, Toric manifolds}}

\section{Introduction}
\subsection{Projective toric manifold and its hypersurfaces}
\begin{defn}
A toric variety is a normal algebraic variety $X$ containing the algebraic torus $(\mc^*)^n$ as a Zariski open subset in such a way that the normal action $(\mc^*)^n$ on itself extends to an action on $X$.

In this paper, we call $X$ a \textbf{projective toric manifold} if $X$ is a compact, smooth toric variety that admits a holomorphic embedding into a certain $\mc P^N$.
\end{defn}

The algebraic topology of projective toric manifold has been fully studied by many people and many results can be found in these two classical books \cite{Buchstaber},\cite{Fulton}. In this paper, what we need are the following two propositions (\cite{Fulton}, page 56,101,102).

\begin{prop}
Let $X$ be a projective toric manifold, then $X$ is simply connected and the odd dimension homology groups of $X$ vanish, i.e. $H_{odd}(X,\mz)=0$.
\end{prop}

\begin{prop}
$H_*(X,\mz)$ can be generated by the projective toric submanifolds of $X$, i.e. there exist smooth toric submanifolds $\{X_i\}$ with $x_i=[X_i]\in H_*(X,\mz)$ such that the homomorphism
\[
\sum \mz x_i \longrightarrow H_*(X,\mz)
\]
is surjective.
\end{prop}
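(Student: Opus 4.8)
The plan is to take the generating submanifolds to be the closures of the torus orbits and to show that their fundamental classes already surject onto $H_*(X,\mz)$. Write $d=\dim_{\mc}X$ and let $\Delta$ be the fan of $X$. By the orbit--cone correspondence (\cite{Fulton}, p.~56) one has $X=\bigsqcup_{\sigma\in\Delta}O_\sigma$ with $O_\sigma\cong(\mc^*)^{\,d-\dim\sigma}$, and each orbit closure $V(\sigma):=\overline{O_\sigma}$ is a $T$-invariant subvariety whose own fan is the star of $\sigma$ in $\Delta$. Since $X$ is a projective toric \emph{manifold}, every cone of $\Delta$ is smooth (spanned by part of a $\mz$-basis), hence so is every star; thus each $V(\sigma)$ is itself a smooth projective toric manifold. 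These are the candidate submanifolds, and the proposition reduces to the claim that $\{[V(\sigma)]\}$ generates $H_*(X,\mz)$.

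To prove generation I would produce an affine paving of $X$. Pick a one-parameter subgroup $\lambda\colon\mc^*\to T$ generic enough that its pairing with the primitive ray generators separates the maximal cones; then the fixed locus of $\lambda$ is the finite set of torus-fixed points, and the associated Bia\l ynicki--Birula decomposition writes $X=\bigsqcup_p C_p$ as a disjoint union of attracting cells $C_p\cong\mc^{\,m_p}$ indexed by the fixed points $p$. (Equivalently, a generic linear functional composed with the moment map is a perfect Morse function whose critical points are the vertices of the moment polytope and whose indices are all even, and the $C_p$ are its descending manifolds.) Because the paving consists of complex affine cells, standard affine-paving theory shows that $H_*(X,\mz)$ is free, vanishes in odd degrees, and satisfies $b_{2k}(X)=\#\{p:m_p=k\}$, and moreover that the fundamental classes $[\overline{C_p}]$ of the cell closures form a $\mz$-basis of $H_*(X,\mz)$.

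Finally I would identify the cell closures with orbit closures. Each $C_p$ is $T$-invariant (as $\lambda$ lands in the abelian group $T$ and $p$ is a fixed point), so $\overline{C_p}$ is an irreducible $T$-invariant closed subvariety of $X$, and such subvarieties are exactly the orbit closures; thus $\overline{C_p}=V(\sigma_p)$ for a suitable cone $\sigma_p$. Since the $[\overline{C_p}]$ already form a basis, the subfamily $\{V(\sigma_p)\}\subseteq\{V(\sigma)\}$ generates $H_*(X,\mz)$, and each $V(\sigma_p)$ is a smooth projective toric submanifold by the first paragraph. Taking $\{X_i\}=\{V(\sigma_p)\}$ finishes the proof. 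The step I expect to be the main obstacle is the middle one: setting up the Bia\l ynicki--Birula/Morse paving and invoking affine-paving theory to pass, in \emph{integral} homology, from the even-cell structure to a basis given by fundamental classes of subvarieties. This is the real engine linking the topology of $X$ to its algebraic orbit stratification; by comparison, the orbit--cone dictionary and the smoothness of the $V(\sigma)$ are routine, and the identification of cell closures with orbit closures is immediate once the paving is in hand.
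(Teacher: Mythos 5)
Your argument is correct, and it is essentially the proof the paper intends: the paper gives no proof of this proposition, citing only \cite{Fulton} (pp.~56, 101--102), and the argument there is precisely your Bia\l ynicki--Birula/affine-paving one — order the maximal cones via a generic one-parameter subgroup, observe that the resulting cells are affine spaces whose closures are the (smooth, projective) orbit closures $V(\sigma)$, and conclude that these classes form a $\mz$-basis of $H_*(X,\mz)$. So the proposal matches the cited source's approach and needs no changes.
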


Then we introduce the hypersurface of a projective toric manifold. Let $X$ be a projective toric manifold. For any holomorphic embedding $X\hookrightarrow \mc P^N$, let $F_1$ be a hyperplane of $\mc P^N$, we get a subvariety $i: Y= F_1 \cap X\hookrightarrow X$ of $X$ and $Y$ is called a \textbf{hypersurface} of $X$. By Bertini's theorem, for a generic hyperplane $F_1$ in $\mc P^N$, $Y$ is smooth.

Given such a hypersurface $Y$ of $X\hookrightarrow \mc P^N$, we can also construct the smooth hypersurface $i_d:Y_d\hookrightarrow X$ of $X$ with $(i_d)_*[Y_d]=d(i_* [Y]),\ 0<d\in \mz$. Indeed, we can take $Y_d:=F_d\cap X$, where $F_d$ is a generic hypersurface of $\mc P^N$ with degree $d$ and it is well-known that $Y_d$ is also a smooth hypersurface of $X$.

In this paper, all the hypersurfaces we consider are smooth and when we say a hypersurface $Y_d$, it  always means $Y_d$ is a smooth hypersurface.

Similar to the degree of a hypersurface in $\mc P^n$, we can define the degree of a (smooth) hypersurface in $X$. Let $\alpha_Y$ be the element of $H^2(X,\mz)$ such that $\alpha_Y\cap [X]=i_*[Y]$. We define the \textbf{degree} of a hypersurface $Y$ in $X$ by
\[
degY:= <\alpha_{Y}^{n+1},[X]>
\]
For the hypersurface $Y_d$, we have relation $d\alpha_Y=\alpha_{Y_d}$ and we have $degY_d=d^{n+1}degY$.

\subsection{Main results}
Let $X^{n+1}$ be a projective toric manifold with complex dimension $n+1$, $n>2$. Let $i:Y\hookrightarrow X$ be the hypersurface of X with complex dimension $n$ and $i_d:Y_d\hookrightarrow X$ be the hypersurface with $(i_d)_*[Y_d]=d(i_* [Y]),\ 0<d\in \mz$. In this paper, we want to discuss the topological decomposition of the hypersurface $Y_d$. Our main results are:
\begin{thm}
When $n$ is odd, for any integer $d>0$, we have decomposition:
\[
Y_d \cong Y_{d}^{'}\ \sharp \ s_d (S^n\times S^n)
\]
where the n-th Betti number $b_n(Y_{d}^{'})=0$ or $2$.
\end{thm}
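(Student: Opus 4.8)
The plan is to reduce the statement to C.\,T.\,C.\ Wall's surgery-theoretic analysis of middle-dimensional handles, after first pinning down the homology of $Y_d$ and showing that every middle class is carried by an embedded framed sphere. First I would record the homological input. By the Lefschetz hyperplane theorem the inclusion $i_d\colon Y_d\hookrightarrow X$ is $n$-connected, so $\pi_1(Y_d)=\pi_1(X)=1$ (here $n>2$ is used) and $H_k(Y_d)\cong H_k(X)$ for $k<n$. Since $X$ is a projective toric manifold its integral homology is free and concentrated in even degrees (Proposition 1.3), so $H_k(Y_d)=0$ for every odd $k<n$, and by Poincar\'e duality on the closed oriented $2n$-manifold $Y_d$ the same holds for odd $k>n$; the lower even groups $H_{2i}(Y_d)\cong H_{2i}(X)$ and their duals simply persist. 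A universal-coefficient and duality argument (the torsion of $H_n(Y_d)$ is isomorphic to that of $H_{n-1}(Y_d)\cong H_{n-1}(X)$, which is free) shows that $H_n(Y_d)$ is free abelian, and since $n$ is odd its intersection form is skew-symmetric and unimodular, hence hyperbolic of even rank $b_n(Y_d)=2m_d$, with a symplectic basis $e_1,f_1,\dots,e_{m_d},f_{m_d}$.

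Next I would show that each middle class is spherical and embeddable. Because $H_n(X)=0$, all of $H_n(Y_d)$ is the vanishing homology of a Lefschetz pencil of degree-$d$ hypersurfaces cutting out $Y_d$, and this is generated by the embedded vanishing cycles $S^n$; as the Hurewicz image is a subgroup containing these generators, $\pi_n(Y_d)\to H_n(Y_d)$ is onto, so every class is represented by a map $S^n\to Y_d$. Using $\pi_1(Y_d)=1$, $2n\ge 6$ and the Whitney trick (valid since $n\ge 3$), I would promote such a map to an embedded sphere and, via the Haefliger--Whitney metastable range $2n\ge\frac32(n+1)$, arrange the embeddings of a hyperbolic pair $(e_i,f_i)$ to meet transversally in a single point. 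The self-intersection of each class is $0$, so the Euler number of each normal bundle vanishes; the remaining datum is the quadratic refinement $q\colon H_n(Y_d;\mz/2)\to\mz/2$ recording the framing obstruction of the normal bundle, satisfying $q(x+y)=q(x)+q(y)+x\cdot y$.

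With framed spheres in hand I would carry out the ``removing handles'' step. A hyperbolic pair realized by two framed embedded spheres meeting once has a regular neighborhood diffeomorphic to $(S^n\times S^n)\setminus\mathrm{int}\,D^{2n}$, and a standard splitting lemma then gives $Y_d\cong Y_d^{(1)}\sharp(S^n\times S^n)$, where the intersection form of $Y_d^{(1)}$ is the orthogonal complement $\langle e_i,f_i\rangle^{\perp}$ and all the lower even homology of $X$ is retained by $Y_d^{(1)}$. Iterating peels off $S^n\times S^n$ summands one hyperbolic plane at a time. The obstruction to continuing all the way down to the zero form is exactly the Arf invariant of $q$: when $\mathrm{Arf}(q)=0$ a symplectic basis can be chosen with $q\equiv 0$, every pair splits, and the remainder $Y_d'$ has $b_n(Y_d')=0$; when $\mathrm{Arf}(q)=1$ all but one hyperbolic plane splits and the surviving plane is realized by the Kervaire-type manifold $K$ with $b_n(K)=2$, giving $b_n(Y_d')=2$. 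In either case $s_d=m_d$ or $m_d-1$ and $b_n(Y_d')\in\{0,2\}$, as claimed.

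The main obstacle I expect is the middle-dimensional geometry rather than the bookkeeping: establishing that the classes of $H_n(Y_d)$ are spherical and representable by embedded spheres with controlled normal bundle in a manifold that is only simply connected (not $(n-1)$-connected), and then tracking the quadratic refinement carefully enough that its Arf invariant governs the last hyperbolic plane. Everything below the middle dimension must be shown not to interfere with the Whitney moves or the connected-sum splittings; this is precisely where the vanishing-cycle description of $H_n(Y_d)$ and the hypothesis $n\ge 3$ become essential.
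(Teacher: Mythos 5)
There is a genuine gap at the heart of your argument: you assume that the framing obstruction of the normal bundle descends to a well-defined quadratic refinement $q\colon H_n(Y_d;\mz_2)\to\mz_2$ with $q(x+y)=q(x)+q(y)+x\cdot y$, and then let its Arf invariant decide between $b_n(Y_d')=0$ and $2$. But for $n$ odd this function is \emph{not} automatically well defined on homology classes: two embedded spheres representing the same class can have different (stably trivial) normal bundles. By Browder's criterion (Proposition 3.5 of the paper), the obstruction defines a quadratic form on $\mathrm{Ker}(H_n(Y_d;\mz_2)\to H_n(W;\mz_2))$ if and only if the Wu class $v_{n+1}(W)$ of a suitable ambient manifold $W\supset Y_d\times\mr^q$ vanishes; here the paper takes $W=D(-L_{Y_d})$, the disk bundle of the stable inverse of the line bundle defining $Y_d$. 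When $v_{n+1}(W)\neq 0$ there is an embedded $S^n$ that is nullhomologous mod $2$ yet has nontrivial normal bundle, so ``$q$'' is not a function of the class at all and the Arf-invariant dichotomy collapses. The paper therefore argues by cases: if $v_{n+1}(W)=0$ your argument is essentially theirs (Browder's $\psi$ plus Wall's normal form, giving $b_n(Y')=2$, or $0$ if the Arf invariant vanishes); if $v_{n+1}(W)\neq 0$ they upgrade Browder's theorem (Theorem 3.11, using that $H_{n+1}(W;\mz_2)$ is generated by oriented toric submanifolds) to produce an embedded sphere $g$ with $g_*[S^n]=0$ in \emph{integral} homology and normal bundle $TS^n$, and then use Wall's addition trick to retrivialize every normal bundle, which in fact yields the stronger conclusion $b_n(Y')=0$. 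Your proof as written covers only the first branch.

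A secondary omission: the $\mz_2$-classification of normal bundles you invoke presupposes that each class is represented by an embedded sphere with \emph{stably trivial} normal bundle, which you flag as an obstacle but do not establish. The paper proves it (Lemma 3.8) by choosing the representative $f_\alpha$ so that $i\circ f_\alpha$ is nullhomotopic in $X$ (possible because $\pi_{n+1}(X,Y)\to\pi_n(Y)\to\pi_n(X)$ and $(X,Y)$ is $n$-connected) and reading off $\epsilon^{2n+2}=TS^n\oplus\eta_{f_\alpha}\oplus(i\circ f_\alpha)^*L_Y=\epsilon^{n+1}\oplus\eta_{f_\alpha}$. Vanishing of the self-intersection number alone does not give this for $n$ odd.
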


\begin{thm}
When $n$ is even, for sufficiently big $d>>0$, we have decomposition:
\[
Y_d \cong Y_{d}^{'}\ \sharp \ s_d (S^n\times S^n)
\]
with $s_d = \frac{b_n(Y_d)-b_n(X)-|sign(Y_d)-sign(H^n(X))|}{2}$, here $sign(Y_d)$ is the classical signature of $Y_d$ and $sign(H^n (X))$ is the signature of the bilinear form defined by:
\[
H^n(X)\otimes H^n(X)\longrightarrow \mz
\]
\[
(x,y)\mapsto <x\cup y\cup \alpha_{Y_d},[X]>
\]
Furthermore, we have limit estimate:
\[
0<\lim_{d\rightarrow +\infty}\frac{2s_d}{degY_d}=\lim_{d\rightarrow +\infty}\frac{2s_d}{d^{n+1}degY}=1- 2^{n+1}(2^{n+1}-1)\frac{B_{\frac{n+2}{2}}}{(n+1)!}<1
\]
here $B_{\frac{n+2}{2}}$ is the $\frac{n+2}{2}$-th Bernouli number.
\end{thm}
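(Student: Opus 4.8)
The plan is to separate the middle-dimensional cohomology of $Y_d$ into the part pulled back from $X$ and a complementary ``vanishing'' part, to split the latter off geometrically as copies of $S^n\times S^n$ via a handle-removal argument, and finally to compute the count $s_d$ and its asymptotics from the intersection form together with Hirzebruch's characteristic-number formulas.

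First I would invoke the Lefschetz hyperplane theorem for the smooth ample divisor $i_d:Y_d\hookrightarrow X$: since $\dim_{\mc}Y_d=n$, the map $i_d^*:H^k(X)\to H^k(Y_d)$ is an isomorphism for $k<n$ and injective for $k=n$, and $Y_d$ is simply connected as $n>2$. Because $X$ is built from even cells its cohomology is torsion free (Proposition 1.2), hence $H^n(Y_d;\mz)$ is a unimodular lattice under the cup-product pairing, which is symmetric since $n$ is even. Put $W:=i_d^*H^n(X)$. The projection formula gives $\langle i_d^*x\cup i_d^*y,[Y_d]\rangle=\langle x\cup y\cup\alpha_{Y_d},[X]\rangle$ for $x,y\in H^n(X)$, so the form restricted to $W$ is exactly the one defining $sign(H^n(X))$; as $\alpha_{Y_d}=d\alpha_{Y}$ is ample, hard Lefschetz makes it nondegenerate over $\mq$, whence $sign(H^n(X))$ is well defined and independent of $d$. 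Letting $V$ be the orthogonal complement of $W$ in $H^n(Y_d;\mq)$, additivity of rank and signature yields $\dim V=b_n(Y_d)-b_n(X)$ and $sign(V)=sign(Y_d)-sign(H^n(X))$.

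Next I would realize the hyperbolic part of $V$ by embedded handles. The key tool is a ``removing a handle'' lemma: if a simply connected closed $M^{2n}$ with $n\ge 3$ contains two embedded $n$-spheres with trivial normal bundles meeting transversally in one point, then $M\cong M'\sharp(S^n\times S^n)$, with the corresponding hyperbolic plane deleted from the intersection form. Splitting the nondegenerate form on $V$ as an orthogonal sum of hyperbolic planes and a summand of rank $|sign(V)|$, I would represent each hyperbolic plane by such a geometric pair (using that for $n\ge 3$ a primitive class of square zero in a simply connected manifold is carried by an embedded sphere) and peel off one $S^n\times S^n$ at a time. Iterating gives $Y_d\cong Y_d'\sharp s_d(S^n\times S^n)$ with $s_d=\tfrac12(\dim V-|sign(V)|)=\tfrac12\big(b_n(Y_d)-b_n(X)-|sign(Y_d)-sign(H^n(X))|\big)$, and $b_n(Y_d')=b_n(X)+|sign(Y_d)-sign(H^n(X))|$. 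The main obstacle, and the reason the even case needs $d\gg 0$, lies here: when $n$ is even the normal bundle of an embedded $S^n$ is not fixed by its self-intersection (the Euler number vanishes but a Pontryagin-type obstruction in $\pi_{n-1}(SO(n))$ can survive), and the vanishing cycles one naturally produces have self-intersection $\pm2$ rather than $0$, so the hyperbolic planes must be assembled and their representatives chosen with trivial normal bundle. For $d\gg 0$ the form on $V$ is highly indefinite — precisely the positivity of the limit below — which supplies the room to carry out these trivializations and removals; I expect this to be the delicate step.

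Finally I would compute the asymptotics. From adjunction, $c(TY_d)=i_d^*c(TX)/(1+d\alpha_Y)$ gives $\chi(Y_d)=\int_X c_n(TY_d)\cup d\alpha_Y$, whose top-order term in $d$ is $d^{n+1}\deg Y$ (using $n$ even); since $b_k(Y_d)=b_k(X)$ for $k\neq n$, one has $b_n(Y_d)=\chi(Y_d)-\mathrm{const}$ and $\lim_{d\to\infty}b_n(Y_d)/(d^{n+1}\deg Y)=1$. The signature theorem gives $sign(Y_d)=\int_X L(TX)\tanh(d\alpha_Y)$, and extracting the top-order-in-$d$ coefficient of $\tanh(d\alpha_Y)$ against $[X]$ produces the asserted Bernoulli-number constant as $\lim_{d\to\infty}sign(Y_d)/(d^{n+1}\deg Y)$, whose absolute value is strictly between $0$ and $1$. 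Since $b_n(X)$ and $sign(H^n(X))$ are constants they drop out, so $\lim_{d\to\infty}2s_d/\deg Y_d=1-|\lim_{d\to\infty}sign(Y_d)/\deg Y_d|$ lies strictly between $0$ and $1$, which completes the estimate.
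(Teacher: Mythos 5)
Your overall skeleton matches the paper's: split $H^n(Y_d)$ into $i_d^*H^n(X)$ and its orthogonal complement, peel off hyperbolic planes as copies of $S^n\times S^n$, and get the asymptotics from $\chi(Y_d)=\langle c(TX)/(1+d\alpha_Y),[X]\rangle$ and $sign(Y_d)=\langle \tanh(d\alpha_Y)L(X),[X]\rangle$; that last part of your argument is essentially the paper's Proposition 4.4. But there is a genuine gap at the central step, where you write that one can ``split the nondegenerate form on $V$ as an orthogonal sum of hyperbolic planes and a summand of rank $|sign(V)|$.'' You take $V$ to be the orthogonal complement over $\mq$, where such a splitting is useless (you need primitive \emph{integral} classes of square zero together with integral dual classes to build embedded handles), and over $\mz$ the claim is simply false for a general nondegenerate lattice: $E=(i_d^*H^n(X))^{\perp}$ is not unimodular (its discriminant is governed by that of $i_d^*H^n(X)$), so the classification of indefinite unimodular forms does not apply. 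Already $\la 2\ra\oplus\la -2\ra$ is indefinite, even, of signature $0$, and contains no hyperbolic plane. This is exactly the content the paper has to supply: its Theorem 4.6 (with Lemmas 4.7--4.9) shows that the orthogonal complement of a sublattice $F$ in a unimodular lattice $H$ does split off planes of the form $\left(\begin{smallmatrix}0&1\\1&c_i\end{smallmatrix}\right)$ \emph{provided} $rank\,H\geqslant\max\{4\,rank\,F,\,2\,rank\,F+5\}$, using Meyer's theorem on an auxiliary indefinite sublattice built from a dual basis; evenness of the vanishing-cycle lattice then forces $c_i=0$, and the growth of $|sign(Y_d)|$ rules out the residual non-definite summand. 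That rank inequality, not normal-bundle trivialization, is the real reason the theorem needs $d\gg 0$.

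Two smaller but real problems. First, your assertion that ``for $n\ge 3$ a primitive class of square zero in a simply connected manifold is carried by an embedded sphere'' is false without knowing the class is spherical (e.g.\ no nonzero class of $H_4(\mc P^2\times\mc P^2)$ is in the image of the Hurewicz map); the paper gets sphericity only for $Ker((i_d)_*)$, from the $n$-connectivity of $(X,Y_d)$ and the relative Hurewicz isomorphism $\pi_{n+1}(X,Y_d)\cong H_{n+1}(X,Y_d)$, which is why the whole construction is confined to the vanishing cycles. Second, you have the normal-bundle issue backwards: for $n$ even a stably trivial $n$-plane bundle over $S^n$ \emph{is} determined by its Euler number, so once a vanishing cycle is represented by an embedded sphere with stably trivial normal bundle (the paper's Lemma 4.1, via nullhomotopy in $X$ and the adjunction-type bundle identity), self-intersection zero already gives a trivial normal bundle; this step is not delicate and is not where largeness of $d$ enters.
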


For $Y_d'$, we have relations $b_n (Y_d')=b_n (Y_d)- 2s_d$ and $sign(Y_d)=sign(Y_d')$. By the limit estimates in proposition 4.4, we know $|sign(Y_d)|=|sign(Y_{d}^{'})|$ tends to $+\infty$ as $d\rightarrow +\infty$. From theorem 1.5, we can deduce that
\begin{coro} When $n$ is even and $d$ is big enough:
\[
b_n(Y_{d}') - |sign(Y_{d}^{'})|= b_n( X) \pm |sign(H^n(X))|
\]
\end{coro}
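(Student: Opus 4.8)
The plan is to obtain the identity by a direct substitution into the formula for $s_d$ from Theorem 1.5, followed by an elementary sign analysis that uses the divergence $|sign(Y_d)|\to+\infty$. First I would combine the two relations recorded immediately before the corollary, namely $b_n(Y_d')=b_n(Y_d)-2s_d$ and $sign(Y_d')=sign(Y_d)$, with the explicit value of $s_d$ supplied by Theorem 1.5. Substituting
\[
2s_d = b_n(Y_d)-b_n(X)-|sign(Y_d)-sign(H^n(X))|
\]
into the first relation makes the term $b_n(Y_d)$ cancel, leaving
\[
b_n(Y_d') = b_n(X)+|sign(Y_d)-sign(H^n(X))|.
\]
Replacing $|sign(Y_d')|$ by $|sign(Y_d)|$ via the second relation, the quantity to be computed becomes
\[
b_n(Y_d')-|sign(Y_d')| = b_n(X)+\big(|sign(Y_d)-sign(H^n(X))|-|sign(Y_d)|\big).
\]

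The crux is to show that the bracketed difference equals $\pm|sign(H^n(X))|$ for all sufficiently large $d$. The one nontrivial input is that $sign(H^n(X))$ is a fixed integer depending only on $X$ (not on $d$), whereas $|sign(Y_d)|\to+\infty$ as $d\to+\infty$ by the limit estimates of Proposition 4.4. Abbreviating $a=sign(Y_d)$ and $b=sign(H^n(X))$, once $|a|>|b|$ the integers $a$ and $a-b$ share the same sign, and a short case split finishes the argument: if $a>0$ then $|a-b|-|a|=(a-b)-a=-b$, while if $a<0$ then $|a-b|-|a|=(b-a)-(-a)=b$. In either case this equals $\pm b=\pm|sign(H^n(X))|$, giving the claimed formula.

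I do not expect a genuine obstacle here: the statement is a formal consequence of Theorem 1.5 together with $|sign(Y_d)|\to+\infty$, and the only point demanding care is the interpretation of the symbol $\pm$. It is not a free choice but is fixed by the eventual sign of $sign(Y_d)$ — opposite to $sign(H^n(X))$ when $sign(Y_d)>0$ and equal to it when $sign(Y_d)<0$ — and the whole reduction rests on $b_n(X)$ and $sign(H^n(X))$ being genuinely independent of $d$, which holds because $X$ is fixed and only the degree varies.
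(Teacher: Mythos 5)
Your proposal is correct and is exactly the deduction the paper intends (the paper states the corollary follows from Theorem 1.5 together with the relations $b_n(Y_d')=b_n(Y_d)-2s_d$, $sign(Y_d')=sign(Y_d)$, and $|sign(Y_d)|\to+\infty$, without writing out the algebra you supply). The only point worth a remark is that the bilinear form in Theorem 1.5 involves $\alpha_{Y_d}=d\,\alpha_Y$ and so does scale with $d$, but its signature is unchanged under multiplication by a positive integer, so your use of $sign(H^n(X))$ as a $d$-independent constant is justified.
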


\begin{rem}
Let $F$ be a nonsingular algebraic hypersurface in complex projective space, in \cite{KW}, Kulkarni and Wood proved that there is a differentiable connected sum decomposition
\[
F=M \sharp k (S^n \times S^n)
\]
where $b_n (M) = 0 $ or $2$ for $n$ odd, and $b_n (M) -|sign(M)|=b_n(\mc P^n )\pm sign(H^n (\mc P^{n+1}))=0$ or $2$ for $n$ even.

Our theorem is a generalization of their theorem to the case of hypersurfaces in projective toric manifolds.
\end{rem}

\section{Basic idea of removing handles}
\subsection{Geometric point of view}
Choose a point $(x,y)\in S^n \times S^n$ and there are two embedded spheres: $S_1:=S^n \times \{y\},\ S_2:= \{x\}\times S^n \hookrightarrow S^n \times S^n$ with properties:\\
(1). $S_1$ intersects $S_2$ transversally at one point $(x,y)$.\\
(2). The normal bundles of $S_1$, $S_2$ in $S^n \times S^n$ are trivial.\\
(3). Denote $\eta_1 := S_1 \times D^n\subset S^n \times S^n $ and $\eta_2 := S_2 \times D^n\subset S^n\times S^n$ by the closure of their normal bundles, we see $\eta_1 \cup \eta_2$ is a manifold with boundary $S^{2n-1}$ and
\[
S^n\times S^n = (\eta_1 \cup \eta_2) \cup_{S^{2n-1}} D^{2n}
\]
Conversely, let $M^{2n}$ be a smooth manifold and $S_1,\ S_2$ be two embedded n-spheres of $M^{2n}$ with:\\
(1). $S_1$ intersects $S_2$ transversally at one point.\\
(2). The normal bundles of $S_1$ and $S_2$ are trivial.\\
We denote $\xi_{1}:= S_1\times D^n, \ \xi_{2}:= S_2 \times D^n$ by the closure of their normal bundles. Observe that $\eta_1 \cup \eta_2 \cong \xi_{1}\cup \xi_{2}$ and we get:
\[
M\cong (M-\xi_{1}^{\circ}\cup \xi_{2}^{\circ})\cup_{S^{2n-1}}(\eta_1\cup\eta_2)\cong M' \sharp\  S^n\times S^n
\]
where $M' = (M-\xi_{1}^{\circ}\cup \xi_{2}^{\circ})\cup_{S^{2n-1}} D^{2n}$. This is the basic idea of removing handles from a $2n$-manifold (\cite{Wood1}). Next we want to realize this idea by algebraic topology.

\subsection{Homological point of view}
From the point of view of homology, let $M^{2n}$ be a simply connected smooth closed manifold of dimension $2n$, $n>2$ and $h: \pi_n (M)\longrightarrow H_n(M,\mz)$ be the Hurewicz map. For every $\alpha, \ \beta \in h(\pi_n(M))\subset H_n(M,\mz)$ with intersection number $\alpha\cdot\beta=1$, by Whitney's embedding theory and Whitney's trick (\cite{Ranichi}, p142), there are two embedding $n$-spheres $f_{\alpha},\ f_{\beta}: S^n \hookrightarrow M^{2n}$ with:\\
(1). The homology elements $\alpha$ and $\beta$ are represented by $f_{\alpha},\ f_{\beta}$, i.e. $(f_{\alpha})_* [S^n]=\alpha, \ (f_{\beta})_* [S^n]=\beta $\\
(2). The spheres $f_{\alpha}(S^n)$ and $f_{\beta}(S^n)$ intersect transversally at only one point.

Following the geometric idea of removing handles, the next question is how to determine the normal bundles. In general, the normal bundles of $f_{\alpha}(S^n),\ f_{\beta}(S^n)$ are not easy to determine.

In this paper, the situation seems relatively simpler: let $K\subset h(\pi_n (M))$ be a free Abel group such that each element $\alpha\in K$ can be represented by an embedded $n$-sphere $f_{\alpha}:S^n\hookrightarrow M$ with \textbf{stable trivial} normal bundle.

When $n$ is even, for the embedding $f_{\alpha}$ representing $\alpha\in K$, the normal bundle of $f_{\alpha}$ is just determined by the self-intersection number $\alpha\cdot\alpha$ of $\alpha$. Indeed, $\alpha\cdot\alpha=0$ if and only if the normal bundle of $f_{\alpha}$ is trivial. So, if we could find a free subgroup $\oplus_{i=1}^{s}(\mz \alpha_i \oplus \beta_i)$ of $K$ with intersection matrix $
\oplus_{i=1}^{s} \left(\begin{array}{cc}
0 & 1\\
1 & 0
\end{array}
\right)$, topologically, $M$ admits a decomposition:
\[
M\cong M' \sharp \ s(S^n \times S^n)
\]

When $n$ is odd, the intersection number $\alpha\cdot\alpha$ is always zero and can not determine the normal bundle of $f_{\alpha}$, we need two techniques.

\textbf{Technique 1:} find a quadratic function $\psi:K\longrightarrow \mz_2$ with:\\
(1). $\psi(\alpha+ \beta)=\psi(\alpha)+\psi(\beta)+ (\alpha\cdot\beta)_2$, where $(\alpha\cdot\beta)_2\in \mz_2$ is the mod 2 class of the intersection number $\alpha\cdot\beta\in \mz$, which is also the definition of the quadratic function over $\mz$.\\
(2). $\psi(\alpha)=0$ if and only if $\alpha$ can be represented by an embedded $n$-sphere $f_{\alpha}$ with trivial normal bundle.

For any free subgroup $\oplus_{i=0}^{s}(\mz \alpha_i' \oplus \beta_i')$ of $K$ with intersection matrix $
\oplus_{i=0}^{s} \left(\begin{array}{cc}
0 & -1\\
1 & 0
\end{array}
\right)$, by the standard results of the quadratic function (\cite{Wall}, p172), we can find a new basis $\{\alpha_i,\beta_i\}$ of this subgroup such that the intersection matrix of
$\mz\alpha_0 \oplus \mz\beta_0 \oplus \{\oplus_{i=1}^{s}(\mz \alpha_i \oplus \beta_i)\}$ is still $
\oplus_{i=0}^{s} \left(\begin{array}{cc}
0 & -1\\
1 & 0
\end{array}
\right)$
and $\psi(\alpha_i)=\psi(\beta_i)=0, i\neq 0, \ \psi(\alpha_0)=\psi(\beta_0)=0$ or $1$. In this case, although we can not determine the value of $\psi(\alpha_0)$, at least, we have decomposition:
\[
M\cong M' \sharp \ s(S^n \times S^n)
\]

In general, the quadratic function $\psi$ is not always exist on $K$ and we need the second technique.

\textbf{Technique 2:} find an embedded $n$-sphere $g:S^n \hookrightarrow M$ with:\\
(1). $g_* [S^n]=0\in H_n(M,\mz).$\\
(2). The normal bundle $\eta_g$ of $g:S^n \hookrightarrow M$ is isomorphic to the tangent bundle $TS^n$ of $S^n$.

If we could find such an embedding $g$, for every element $\alpha\in K$ which is represented by an embedding $f_{\alpha}$, if the stable trivial normal bundle $\eta_{f_{\alpha}}$ is not trivial, by Wall's technique (\cite{Wall}, p167), there exists a new embedding $f_{\alpha}'$ with normal bundle $\eta_{f_{\alpha}'}$ such that:\\
(1). $f_{\alpha}'=f_{\alpha}+g\in \pi_n (M)$\\
(2). $F(\eta_{f_{\alpha}'})=F(\eta_{f_{\alpha}}) + F(\eta_g)$
where $F$ is the isomorphism:
\[
\{ \text{$n$ dimensional stable trivial vector bundles over $S^n$}\} \longleftrightarrow Ker(\pi_{n-1}(SO(n))\rightarrow \pi_{n-1}(SO))
\]
It is known that $Ker(\pi_{n-1}(SO(n))\rightarrow \pi_{n-1}(SO))=0, \ n=1,3,7$ and $Ker(\pi_{n-1}(SO(n))\rightarrow \pi_{n-1}(SO))=\mz_2$ with generator $F(TS^n)$, $n$ odd $\neq 1,3,7$. (\cite{BrowderS}, p88).

In this case, modifying by this embedding $g,$ we can make every element $\alpha\in K$ represented by an embedding with trivial normal bundle. So, for any free subgroup $\oplus_{i=1}^{s}(\mz \alpha_i \oplus \beta_i)$ of $K$ with intersection matrix $
\oplus_{i=1}^{s} \left(\begin{array}{cc}
0 & -1\\
1 & 0
\end{array}
\right)$, $M$ admits a decomposition:
\[
M\cong M' \sharp \ s(S^n \times S^n)
\]

These are the basic tools to removing ($n$ dimensional) handles from a $2n$-manifold. In the next two sections, we will apply these tools to prove our main theorems.
\section{Odd case}
\subsection{Wu class, quadratic function, and Kervaire invariant}
Given a smooth manifold $(M^n,\partial M)$, the Steenrod operator $Sq=\sum_{i=0} {Sq}^i:H^*(M,\partial M,\mz_2)\rightarrow H^*(M,\partial M,\mz_2)$ determines a linear form on $H^*(M,\partial M,\mz_2)$:
\[
H^*(M,\partial M )\longrightarrow {\mz}_2
\]
\[
x \mapsto <Sq(x),[M]>
\]
where $[M]\in H_n(M,\partial M,\mz_2)$ is the fundamental class of the Poincar\'{e} pair $(M,\partial
M)$. Since the cup product induces the isomorphism $H^*(M,\mz_2)\cong Hom(H^*(M,\partial M,\mz_2),{\mz}_2)$, there exists a unique element $v(M)=1+v_1(M)+v_2(M)+\cdot\cdot\cdot\in H^*(M,\mz_2) $ such that for each $x\in H^*(M,\partial M,\mz_2)$:
\[
<v(M)\cup x,[M]>= <Sq(x),[M]>,\  <v_i(M)\cup x,[M]>=<Sq^i x,[M] >
\]
\begin{defn}
$v(M)=\sum_{i=0}v_i (M)$ is called the Wu class of $M$.
\end{defn}
By the definition, we see $v_i(M)=0 \Longleftrightarrow Sq^i:H^{n-i}(M,\partial M,\mz_2)\rightarrow \mz_2$ is zero.

In his paper \cite{BrowderC}, Browder gave a geometric definition of Kervaire invariant, which is equivalent to his original definition of Kervaire invariant in \cite{BrowderK}. This geometric definition is very close to the original definition of Kervaire, which is defined by the Arf invariant of a certain quadratic function, (cf \cite{KervaireMilnor}). First, it is known in \cite{BrowderC} that:
\begin{prop}
For any $x\in H_n (M^{2n},\mz_2)$, we can find an embedded $N^n \subset M^{2n}$ with $[N]=x$.
\end{prop}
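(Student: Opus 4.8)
The statement is Thom's representability theorem specialized to middle dimension, and the natural route is the transversality construction that extracts a submanifold from a classifying map into a Thom space. First I would pass from homology to cohomology: since $M^{2n}$ is a closed smooth manifold, cap product with the $\mz_2$-fundamental class gives the Poincar\'e duality isomorphism $D:H^n(M,\mz_2)\xrightarrow{\ \cong\ }H_n(M,\mz_2)$, valid with no orientability hypothesis. Let $u\in H^n(M,\mz_2)$ be the unique class with $u\cap[M]=x$. It then suffices to produce a smooth closed codimension-$n$ (hence $n$-dimensional) submanifold $N\subset M$ whose $\mz_2$ Poincar\'e dual is $u$, for then $[N]=u\cap[M]=x$ in $H_n(M,\mz_2)$.

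Next I would realize $u$ as a pulled-back Thom class. Let $\gamma\to BO(n)$ (modelled by a Grassmannian $G(n,r)$ with $r$ large) be the universal rank-$n$ real bundle, with Thom space $MO(n)$ and Thom class $U\in H^n(MO(n),\mz_2)$; the class $U$ is represented by a map $\tau:MO(n)\to K(\mz_2,n)$, while $u\in H^n(M,\mz_2)=[M,K(\mz_2,n)]$ is represented by some $f:M\to K(\mz_2,n)$. The crux is to lift $f$ through $\tau$, i.e. to find $\tilde f:M\to MO(n)$ with $\tau\circ\tilde f\simeq f$, equivalently $\tilde f^{\,*}U=u$. This is the heart of the argument and the one genuinely hard step: the homotopy fibre of $\tau$ is $n$-connected but has nonvanishing homotopy above dimension $n$ (matching the unoriented bordism groups $\mathfrak{N}_q$), so over the $2n$-dimensional complex $M$ the naive obstruction groups need not vanish. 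The lift nonetheless exists, and this is exactly Thom's computation showing that $H^*(MO(n),\mz_2)$ is a free module over the Steenrod algebra with $U$ among its generators, so that $\tau^*$ splits off the free summand generated by $\iota_n$; it is precisely here that working mod $2$ is essential, since the corresponding integral statement fails.

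Finally I would invoke transversality. After smoothing $\tilde f$, homotope it to a smooth map transverse to the zero section $BO(n)\subset MO(n)$ and set $N:=\tilde f^{-1}(BO(n))$. Then $N$ is an \emph{embedded} closed submanifold of $M$ of codimension $n$, with normal bundle $\tilde f^{\,*}\gamma$, and naturality of the Thom isomorphism identifies the $\mz_2$ Poincar\'e dual of $N$ with $\tilde f^{\,*}U=\tilde f^{\,*}\tau^*\iota_n=f^*\iota_n=u$. Hence $[N]=u\cap[M]=x$, as required. Everything preceding the lift (duality and the passage to classifying maps) is formal, and everything following it (transversality and the naturality of the Thom class) is standard differential topology; the single obstacle is the lifting through $\tau$, which is the substance of Thom's representability theorem cited from \cite{BrowderC}.
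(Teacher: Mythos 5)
The paper offers no proof of this proposition at all --- it is quoted as a known result from Browder \cite{BrowderC}, where it is in turn Thom's mod~$2$ representability theorem --- and your argument is precisely the standard proof of that theorem: $\mz_2$ Poincar\'e duality, lifting the classifying map $M\to K(\mz_2,n)$ through the Thom space $MO(n)$ of a Grassmannian model via Thom's splitting of $H^*(MO(n),\mz_2)$ as a free module over the Steenrod algebra, and transversality to the zero section. Your identification of the lift as the single non-formal step, and of mod~$2$ coefficients as what makes it possible, is accurate; the argument is correct and is exactly the proof the citation has in mind.
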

\begin{prop}
If $M^{2n}\times {\mr}^q\subset W^{2n+q}$, $W$ connected and $y\in H_{n+1}(W,M,\mz_2)$, we can find $N\subset M $ representing $\partial y\in H_n(M,\mz_2)$, i.e $[N]=\partial y$ with $N=\partial V$, here $i:V\subset W\times [0,1]$ is a connected submanifold with $i_* [V]=y$, where $[V]\in H_{n+1}(V,\partial V,\mz_2)$ is the fundamental class. Furthermore, $V$ meets $W\times 0$ transversally in $N\subset M$.
\end{prop}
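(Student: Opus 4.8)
The plan is to regard this as the relative, bordism-type analogue of the preceding proposition: where that statement represents an absolute mod-$2$ class by a closed embedded submanifold, here I want to represent the relative class $y\in H_{n+1}(W,M,\mz_2)$ by an embedded manifold-with-boundary $V$ whose boundary lies in $M$ and carries $\partial y$. The role of the auxiliary interval $[0,1]$ is to provide one extra normal direction, so that the interior of $V$ can be pushed off the face $W\times\{0\}$ and $V$ meets that face transversally precisely along its boundary. The engine throughout is Thom's mod-$2$ representability theorem, of which the preceding proposition is the closed case.

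First I would represent $y$ by a singular manifold. Since the mod-$2$ Thom map $\mathfrak{N}_*(W,M)\to H_*(W,M,\mz_2)$ is surjective, there is a compact smooth $(n+1)$-manifold-with-boundary $(V_0,\partial V_0)$ and a map $g:(V_0,\partial V_0)\to (W,M)$ with $g_*[V_0]=y$, where $[V_0]\in H_{n+1}(V_0,\partial V_0,\mz_2)$ is the fundamental class. Because $\partial[V_0]=[\partial V_0]$ and the connecting homomorphism is natural with respect to $g$, the restriction satisfies $(g|_{\partial V_0})_*[\partial V_0]=\partial y\in H_n(M,\mz_2)$.

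Next I would upgrade $g$ to a suitably situated embedding in $W\times[0,1]$. Fixing a collar $\partial V_0\times[0,1)\hookrightarrow V_0$, I define $\tilde g:V_0\to W\times[0,1]$ so that $\partial V_0$ maps into $M\times\{0\}$ by $g|_{\partial V_0}$, the collar is pushed into $W\times(0,1]$ with the interval coordinate increasing from $0$, and the rest of $V_0$ stays at interval coordinate bounded away from $0$; then $\tilde g$ is transverse to $W\times\{0\}$ and meets it exactly in $\partial V_0$. The interior of $V_0$ has codimension $n+q$ in $W\times[0,1]$, which lies in the stable range once $q\geq 2$ (and $q$ is large in the applications), so by Whitney general position a small perturbation of $\tilde g$ that is fixed near $\partial V_0$ is an embedding. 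On the boundary I invoke the preceding proposition to take $N:=\partial V\hookrightarrow M$ to be an honest embedding representing $\partial y$, and extend it across the collar. This yields an embedded submanifold $i:V\hookrightarrow W\times[0,1]$ meeting $W\times\{0\}$ transversally in $N\subset M$, and $i_*[V]=y$ under the isomorphism $H_{n+1}(W\times[0,1],M\times\{0\})\cong H_{n+1}(W,M)$ induced by the projection $W\times[0,1]\to W$, since the perturbation preserves the mod-$2$ fundamental class.

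Finally, as $W$, and hence $W\times[0,1]$, is connected, any two components of $V$ can be joined by an embedded thin tube running through $W\times(0,1)$; working mod $2$ this changes neither $i_*[V]=y$ nor the boundary $N$, so $V$ may be taken connected. I expect the principal obstacle to be the boundary bookkeeping in the embedding step, namely reconciling the embedded representative $N$ of $\partial y$ given by the preceding proposition with the genuine boundary of the embedded filling $V$; this is exactly the point where the embedded relative representability and the compatibility of the two connecting homomorphisms must be checked with care. The general-position extension over the interior and the tubing for connectedness are then routine, because the codimension $n+q$ is comfortably inside the stable range.
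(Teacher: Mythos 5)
The paper offers no proof of this proposition to compare against: both it and Proposition 3.2 are quoted from Browder \cite{BrowderC} as known results. Judged on its own, your proposal has a genuine gap, and it sits exactly where you flag the ``boundary bookkeeping.'' Producing a singular representative $g:(V_0,\partial V_0)\to (W,M)$ of $y$ from the surjectivity of $\mathfrak{N}_{n+1}(W,M)\to H_{n+1}(W,M;\mz_2)$ is fine, and general position does embed the interior in $W\times[0,1]$, where the codimension is $n+q$. But the boundary map $g|_{\partial V_0}:(\partial V_0)^n\to M^{2n}$ lies exactly outside the Whitney range ($2n<2(n)+1$), so general position only yields an immersion with double points; and your substitute --- taking the embedded $N$ supplied by Proposition 3.2 and ``extending it across the collar'' --- does not glue to $V_0$, because $\partial V_0$ and $N$ are merely homologous in $M$, not equal, and not even necessarily bordant over $M$ (the Thom map $\mathfrak{N}_n(M)\to H_n(M,\mz_2)$ has a large kernel). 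Worse, the order of quantifiers matters: an arbitrary embedded representative $N$ of $\partial y$ need not bound anything mapping to $y$, nor even bound at all as a bordism class in $W$; the proposition asserts the existence of \emph{some} $N$ constructed together with $V$, not that every embedded representative extends.

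The standard (and Browder's) route builds $N$ and $V$ simultaneously: represent the Lefschetz dual of $y$ by a map of $W\times[0,1]$ (rel the appropriate faces) into the Thom space $MO(n+q)$ --- possible because the codimension $n+q$ puts $\dim V=n+1$ within Thom's range $n+1\leqslant \tfrac{1}{2}\dim(W\times[0,1])$ --- and obtain $V$ by transversality to the zero section, arranging transversality to $W\times 0$ at the same time; then $N:=V\cap (W\times 0)$ is automatically an embedded representative of $\partial y$ and $V$ meets $W\times 0$ transversally in it. Alternatively, your approach can be salvaged by resolving the finitely many double points of $g|_{\partial V_0}$ by local cut-and-paste (changing the diffeomorphism type of the boundary but not its mod~2 class) and modifying $V_0$ accordingly near its boundary; but that resolution is precisely the nontrivial content your sketch leaves unsupplied.
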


We see, in this case, the normal bundle of $N$ in $W\times 0$ has a normal $q$-frame ($N\times {\mr}^q \subset M\times {\mr}^q\subset W$). The obstruction to extending this frame to a normal $q$-frame on $V\subset W\times [0,1]$ lies in $H^{i+1}(V,N,\pi_{i}(V_{n+q,q}))$ and $\pi_{i}(V_{n+q,q})=0,q<n, \ \pi_{n}(V_{n+q,q})=\mz_2$. We find the last and only one obstruction $\sigma\in H^{n+1}(V,N,\pi_{n}(V_{n+q,q}))=\mz_2$.
\begin{defn}
For the element $x=\partial y\in H_n (M,\mz_2)$, where $\partial: H_{n+1}(W,M,\mz_2)\longrightarrow H_n (M,\mz_2)$ and $y\in H_{n+1}(W,M,\mz_2)$, we define: $\psi(x)=<\sigma,[V]>,$ which is denoted briefly by $\sigma$ for convenience.
\end{defn}

We see this definition seems not intrinsic, it depends on the choice of $N$ and $V$. We should put some condition to make $\psi$ well-defined. Browder proved (\cite{BrowderC}):

\begin{prop}
The obstruction to extend a $q$-frame defines a quadratic form:
\[
\psi: Ker(H_n (M,\mz_2)\longrightarrow H_n(W,\mz_2))\rightarrow \mz_2
\]
if and only if $v_{n+1}(W)=0$.
\end{prop}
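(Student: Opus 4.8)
The plan is to first reduce the statement to a computation on a single closed submanifold, and then to evaluate the resulting frame-obstruction as a characteristic number of the ambient manifold. From the long exact sequence of $(W,M)$ one has $Ker(H_n(M,\mz_2)\to H_n(W,\mz_2))=\mathrm{Im}(\partial)$, so every $x$ in the domain of $\psi$ is of the form $x=\partial y$ and Proposition 3.3 produces a pair $(N,V)$ together with its unique frame obstruction $\sigma\in H^{n+1}(V,N,\mz_2)\cong\mz_2$ (the isomorphism being Lefschetz duality for the connected $V$); Definition 3.4 sets $\psi(x)=\langle\sigma,[V]\rangle$. The real content of the proposition is that this value is independent of all the choices (the representative $y$, the submanifold $N$, the bounding $V$, and the extension of the frame), and that the resulting map is quadratic. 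I expect well-definedness to be the crux, and the condition $v_{n+1}(W)=0$ to be exactly what governs it.

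Independence of the chosen extension of the $q$-frame over $V$ is routine: two extensions agreeing on $N=\partial V$ differ by a cochain, so the obstruction cocycle changes by a coboundary, which pairs trivially with the relative fundamental class $[V]$. For the remaining choices I would compare two representatives $(N,V)$ and $(N',V')$ of the same $x$ by gluing them into a single closed $(n+1)$-manifold: after matching the boundary data (using a cobordism inside $M$ when $N\neq N'$) one forms $Z=V\cup_N V'$ as a closed submanifold of $W\times[-1,1]$, carrying a class $j_*[Z]\in H_{n+1}(W,\mz_2)$. Since the frames on the two pieces both restrict to the fixed frame coming from $M\times\mr^q$, additivity of obstructions gives
\[
\psi_{(N,V)}(x)-\psi_{(N',V')}(x)=\langle\sigma_Z,[Z]\rangle ,
\]
where $\sigma_Z$ is now the total obstruction to a global $q$-frame on the normal bundle $\nu_Z$ of the closed manifold $Z$; note $v_{n+1}(W\times[-1,1])=v_{n+1}(W)$.

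The key step is to identify this number. Because $n$ is odd, the primary (and, in this dimension, only) obstruction to a $q$-frame in the rank $n+q$ bundle $\nu_Z$ is its top Stiefel--Whitney class, so $\langle\sigma_Z,[Z]\rangle=\langle w_{n+1}(\nu_Z),[Z]\rangle$. I would then combine Wu's formula on both $W$ and $Z$ with the Whitney sum $TW|_Z=TZ\oplus\nu_Z$: writing $j^*w(TW)=Sq(j^*v(W))$ and $w(TZ)^{-1}=Sq(v(Z)^{-1})$ gives $w(\nu_Z)=Sq\big(j^*v(W)\cdot v(Z)^{-1}\big)$, and applying Wu's formula on $Z$ to the top-degree pairing collapses the intrinsic classes of $Z$ via $v(Z)\cdot v(Z)^{-1}=1$, leaving
\[
\langle\sigma_Z,[Z]\rangle=\langle v_{n+1}(W),j_*[Z]\rangle .
\]
Thus the ambiguity of $\psi$ is exactly the image of the pairing with $v_{n+1}(W)$. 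If $v_{n+1}(W)=0$ this vanishes and $\psi$ is well-defined; conversely, realizing any class of $H_{n+1}(W,\mz_2)$ by an embedded closed submanifold (Thom) and splitting it into two bounding pieces exhibits a genuine difference of choices whenever $v_{n+1}(W)\neq 0$, so well-definedness forces $v_{n+1}(W)=0$.

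Finally, to see $\psi$ is quadratic I would represent $x$ and $x'$ by disjoint pairs $(N,V)$, $(N',V')$ and tube them into a connected representative of $x+x'$; the tubes run through the $x\cdot x'$ (mod $2$) transverse intersection points of $N$ and $N'$, and each such point contributes a single twist to the normal framing, adding $(x\cdot x')_2$ to the obstruction. This yields $\psi(x+x')=\psi(x)+\psi(x')+(x\cdot x')_2$, the required identity. The main obstacle is the characteristic-class computation of the third paragraph—correctly identifying the frame obstruction with $w_{n+1}(\nu_Z)$ and carrying out the Wu-class cancellation—together with the care needed to realize the difference of two choices as an honest closed submanifold so that this computation can be applied.
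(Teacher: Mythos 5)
The paper does not actually prove this proposition: it is quoted verbatim from Browder's paper on complete intersections and the Kervaire invariant, with only a citation, so there is no in-text proof to compare against. Your argument is, in outline, a faithful reconstruction of Browder's proof. Its central identity $\langle \sigma_Z,[Z]\rangle=\langle w_{n+1}(\nu_Z),[Z]\rangle=\langle v_{n+1}(W),j_*[Z]\rangle$, obtained from $w(\nu_Z)=Sq\big(j^*v(W)\cdot v(Z)^{-1}\big)$ together with the defining property $\langle Sq(u),[Z]\rangle=\langle v(Z)\cup u,[Z]\rangle$ of the Wu class of the closed $(n+1)$-manifold $Z$, is correct, and it belongs to the same circle of ideas that the paper itself re-runs (in Thom-class language) in Step 7 of the proof of Theorem 3.11, where the contrapositive is used to produce a nullhomologous sphere with nontrivial normal bundle when $v_{n+1}(W)\neq 0$. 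The converse direction and the quadraticity-by-tubing argument are likewise the standard ones.

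Two places deserve more care than your sketch gives them. First, the phrase ``using a cobordism inside $M$ when $N\neq N'$'' hides the one genuinely delicate step: two embedded $n$-submanifolds of $M^{2n}$ representing the same class in $H_n(M,\mz_2)$ need not cobound an embedded cobordism in $M\times[0,1]$ itself (general position fails by one dimension, since $2(n+1)>2n+1$); one must use the stabilization $M\times\mr^q$ --- this is why the $q$-frame structure is carried along --- and Thom's stable-range realizability to produce an embedded $P$ with $\partial P=N\sqcup N'$ over which the canonical $q$-frame extends with vanishing obstruction. Only after that is $Z=V\cup P\cup V'$ an honest closed submanifold of $W\times[-1,1]$ to which your Wu-class computation applies; the same care is needed in the converse direction when you split a closed representative of a class detecting $v_{n+1}(W)$ into two bounding pieces meeting $M$ in the same $N$ (compare Steps 2--4 of the paper's proof of Theorem 3.11). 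Second, a terminological slip: $w_{n+1}(\nu_Z)$ is not the top Stiefel--Whitney class of the rank-$(n+q)$ bundle $\nu_Z$ but the primary obstruction to $q$ independent sections, living in $H^{n+1}(Z;\pi_n(V_{n+q,q}))$ with $\pi_n(V_{n+q,q})=\mz_2$ because $n$ is odd and $q\geq 2$; since $\dim Z=n+1$ and $V_{n+q,q}$ is $(n-1)$-connected it is also the only obstruction, which is exactly what your identification uses. With these points made precise the argument is complete and agrees with the cited source.
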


\begin{prop}
For the embedding $\phi(S^n)\in M^{2n},n$ odd, if $\phi(S^n)$ is nullhomotopic in $W$, then $\psi([\phi(S^n)])=0$ if and only if the normal bundle of $\phi(S^n)$ is trivial.
\end{prop}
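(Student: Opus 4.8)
The plan is to compute $\psi([\phi(S^n)])$ directly from its definition as a frame-extension obstruction, exploiting the hypothesis that $\phi(S^n)$ bounds in $W$. First I would use nullhomotopy to produce a convenient spanning membrane: since $\phi(S^n)$ is nullhomotopic in $W$, the map $\phi$ extends to $D^{n+1}\to W$, and pushing the interior into $W\times(0,1]$ and applying general position (valid because $2(n+1)<2n+q+1$) I can assume $V=D^{n+1}$ is embedded in $W\times[0,1]$ with $\partial V=N:=\phi(S^n)\subset W\times 0$. Because $V$ is contractible, its normal bundle $\nu(V\subset W\times[0,1])$ is trivial of rank $n+q$. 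Restricting to the boundary and using that $M\times\mr^q$ is open in $W$ gives $\nu(V)|_N\cong \nu(N\subset W)=\nu(N\subset M)\oplus\epsilon^q$; hence $\nu(N\subset M)\oplus\epsilon^q$ is trivial, i.e. the normal bundle of $\phi(S^n)$ in $M$ is stably trivial and is therefore classified by an element $c\in \mathrm{Ker}(\pi_{n-1}(SO(n))\to\pi_{n-1}(SO))$, the group recalled in Section 2.

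Next I would evaluate $\psi$ on this choice of $V$. By the definition of $\psi$, the value $\psi([\phi(S^n)])=\langle\sigma,[V]\rangle$ is the single obstruction in $H^{n+1}(V,N;\pi_n(V_{n+q,q}))\cong\mz_2$ to extending the canonical normal $q$-frame (the $\mr^q$-directions coming from $N\times\mr^q\subset M\times\mr^q$) across $V$. Since $\nu(V)$ is trivial over the contractible $V$, the associated bundle of $q$-frames is a product and this obstruction is simply the homotopy class $[f]\in\pi_n(V_{n+q,q})=\mz_2$ of the frame map $f:S^n\to V_{n+q,q}$ determined by the $\mr^q$-frame in a trivialization of $\nu(V)$; contractibility of $V$ makes $[f]$ independent of the trivialization, so $\psi([\phi(S^n)])=[f]$.

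Finally I would identify $[f]$ with $c$ through the long exact homotopy sequence of the fibration $SO(n)\to SO(n+q)\to V_{n+q,q}$. Under the connecting homomorphism $\partial:\pi_n(V_{n+q,q})\to\pi_{n-1}(SO(n))$, the class of a $q$-frame field over $S^n$ maps to the clutching function of the orthogonal complementary $n$-plane bundle; here that complement is exactly $\nu(N\subset M)$, so $\partial[f]=c$. For $n$ odd and $\neq 1,3,7$ the image of $\partial$ is $\mathrm{Ker}(\pi_{n-1}(SO(n))\to\pi_{n-1}(SO))=\mz_2$, so $\partial:\mz_2\to\mz_2$ is an isomorphism and $\psi([\phi(S^n)])=[f]=0$ if and only if $c=0$, i.e. if and only if $\nu(N\subset M)$ is trivial, as claimed.

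The main obstacle is the residual case $n=1,3,7$, where $\mathrm{Ker}(\pi_{n-1}(SO(n))\to\pi_{n-1}(SO))=0$: there $\partial$ is the zero map, so $\partial[f]=c=0$ forces the normal bundle to be trivial automatically but does not by itself pin down $[f]$. To close the equivalence I must show that in these dimensions the geometric $\mr^q$-frame gives $[f]=0$, i.e. $\psi\equiv 0$; I expect this to follow from the well-definedness of $\psi$ together with the parallelizability special to $S^1,S^3,S^7$, but verifying it carefully, and checking that the obstruction-theoretic identification $\psi=[f]$ is compatible with Browder's well-definedness criterion $v_{n+1}(W)=0$, is the delicate point.
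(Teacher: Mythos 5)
The paper offers no proof of this proposition at all: it is quoted from Browder \cite{BrowderC} (together with Proposition 3.5) without argument, so there is nothing internal to compare against. Your reconstruction is the standard one and, for $n$ odd with $n\neq 1,3,7$, it is correct: taking $V=D^{n+1}$, using contractibility to trivialize $\nu(V)$ and identify the obstruction $\sigma$ with the class $[f]\in\pi_n(V_{n+q,q})$ of the frame map, and then reading off $\partial[f]$ as the clutching class of $\nu(\phi(S^n)\subset M)$ in the exact sequence $\pi_n(SO(n+q))\to\pi_n(V_{n+q,q})\stackrel{\partial}{\to}\pi_{n-1}(SO(n))\to\pi_{n-1}(SO)$ is exactly how one proves this; since $\partial$ is there an isomorphism of $\mz_2$'s, the equivalence follows. (You rightly note that the legitimacy of computing $\psi$ from this particular $V$ rests on the well-definedness criterion $v_{n+1}(W)=0$ of Proposition 3.5.)

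The genuine issue is the residual case $n=1,3,7$, and here your hoped-for repair cannot succeed: the direction ``trivial normal bundle $\Rightarrow\psi=0$'' is actually \emph{false} in those dimensions, not merely delicate. When $\partial=0$, the class $[f]$ lies in the image of $\pi_n(SO(n+q))$, which is all of $\pi_n(V_{n+q,q})=\mz_2$, so it is not controlled by the (automatically trivial) normal bundle; this is precisely the Kervaire-invariant phenomenon, visible already for $n=1$, where the $(1,1)$-curve on a standardly embedded torus has trivial normal bundle but $\psi=\psi(a)+\psi(b)+a\cdot b=1$. The correct fix is to add the hypothesis $n\neq 1,3,7$ to the proposition (as Browder's Theorem, quoted as Theorem 3.9, already does), rather than to try to close the case. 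Note that this does not affect the paper's use of the result: in the proof of the odd case only the implication $\psi(\alpha)=0\Rightarrow$ trivial normal bundle is invoked, and your argument proves that direction for every odd $n$ (if the frame extends over $D^{n+1}$, the complementary $n$-plane bundle extends and is therefore trivial); moreover for $n=3,7$ every stably trivial $n$-plane bundle over $S^n$ is already trivial, so $\psi$ is not needed there at all.
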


\begin{defn}
If Ker$(H_n (M,\mz_2)\longrightarrow H_n(W,\mz_2))$ is non-singular under the intersection pair, we define the Kervaire invariant $k$ by its Arf invariant of the quadratic form $\psi$.
\end{defn}

\subsection{Proof of the odd case I}
Let $X^{n+1}$ be a projective toric manifold with complex dimension $n>2,$ odd, and $i:Y^n\hookrightarrow X^{n+1}$ be a hypersurface of $X^{n+1}$.
\begin{lem}
$H_n(Y,\mz)$ is spherical and every element $\alpha\in H_n(Y,\mz)$ can be represented by an embedding $f_{\alpha}:S^n \hookrightarrow Y$ such that the normal bundle $\eta_{f_{\alpha}}$ of $f_{\alpha}$ is stable trivial.
\end{lem}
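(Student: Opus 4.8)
The plan is to extract everything from the Lefschetz hyperplane theorem together with the (relative) Hurewicz theorem, and then to pin down the normal bundle using the complex structure on $X$. First I would record the Lefschetz input: since $i:Y\hookrightarrow X$ is a smooth hypersurface of a smooth projective variety with $\dim_{\mc}X=n+1$, the homotopy form of the Lefschetz hyperplane theorem gives $\pi_k(X,Y)=0$ for $k\le n$, so the pair $(X,Y)$ is $n$-connected. In particular $\pi_1(Y)\cong\pi_1(X)=0$ by Proposition 1.2, hence $Y$ is simply connected, and moreover, because $n$ is odd, Proposition 1.2 gives $H_n(X,\mz)=0$. These two facts ($n$-connectedness of $(X,Y)$ and the vanishing of the middle homology of $X$) are the whole engine.

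For sphericity I would use naturality of the Hurewicz homomorphism with respect to the boundary maps of the pair $(X,Y)$. Since $(X,Y)$ is $n$-connected and $Y$ is simply connected, the relative Hurewicz theorem makes $h_{rel}:\pi_{n+1}(X,Y)\to H_{n+1}(X,Y)$ an isomorphism. The homology long exact sequence of the pair reads $H_{n+1}(X,Y)\xrightarrow{\partial}H_n(Y)\xrightarrow{i_*}H_n(X)=0$, so the homological boundary $\partial$ is surjective. Naturality gives the relation $h_Y\circ\partial_\pi=\partial\circ h_{rel}$, where $\partial_\pi:\pi_{n+1}(X,Y)\to\pi_n(Y)$ is the homotopy boundary and $h_Y:\pi_n(Y)\to H_n(Y)$ is the Hurewicz map. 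The right-hand side is the composite of an isomorphism with a surjection, hence onto $H_n(Y)$; since it factors through $h_Y$, the map $h_Y$ is itself surjective. Thus every $\alpha\in H_n(Y,\mz)$ admits a spherical lift $\tilde\alpha\in\pi_n(Y)$, and $H_n(Y,\mz)$ is spherical. I would also note that $\tilde\alpha$ may be taken in the image of $\partial_\pi$, so that $i_*\tilde\alpha=0\in\pi_n(X)$ by exactness, i.e. the representing sphere is nullhomotopic once pushed into $X$.

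Next, given a spherical lift, I would invoke Whitney's embedding theorem and the Whitney trick (\cite{Ranichi}), using $\dim Y=2n\ge 6$ and $\pi_1(Y)=0$, to represent $\alpha$ by an embedded sphere $f_\alpha:S^n\hookrightarrow Y$; this does not alter the homotopy class, so $i\circ f_\alpha$ stays nullhomotopic in $X$. To show the normal bundle $\eta_{f_\alpha}$ is stably trivial, I would exploit that $X$ is a complex manifold. Writing $\nu$ for the (complex line) normal bundle of $Y\subset X$, we have $i^{*}TX\cong TY\oplus\nu$, and pulling back by $f_\alpha$ together with $f_\alpha^{*}TY\cong TS^n\oplus\eta_{f_\alpha}$ gives, in $\widetilde{KO}(S^{n})$,
\[
[\eta_{f_\alpha}] \;=\; [(i\circ f_\alpha)^{*}TX]-[TS^{n}]-[f_\alpha^{*}\nu].
\]
Here $[TS^{n}]=0$; the bundle $f_\alpha^{*}\nu$ is a complex line bundle over $S^n$ with $n\ge 3$, hence trivial since $H^2(S^n,\mz)=0$; and $(i\circ f_\alpha)^{*}TX$ is the underlying real bundle of a complex bundle over $S^n$, so its class is the realification of a class in $\widetilde{K}(S^n)$. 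Because $n$ is odd, $\widetilde{K}(S^{n})=0$, so $(i\circ f_\alpha)^{*}TX$ is stably trivial even as a real bundle (and, since $i\circ f_\alpha$ is nullhomotopic, actually trivial). Hence $[\eta_{f_\alpha}]=0$, i.e. $\eta_{f_\alpha}$ is stably trivial, which completes the lemma.

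The step I expect to require the most care is the geometric realization by an \emph{embedding} in the critical dimension $2n$: turning the homotopy class $\tilde\alpha$ into an embedded sphere forces one to cancel double points by the Whitney trick, which is precisely why simple connectivity of $Y$ and $n\ge 3$ are essential (and where the odd-$n$ vanishing of the self-intersection number is convenient). By contrast, the sphericity is formal once the Lefschetz input is in hand, and the stable triviality of the normal bundle is immediate from the single observation that the relevant bundle is the underlying real bundle of a complex bundle over an odd-dimensional sphere, for which $\widetilde{K}(S^n)=0$.
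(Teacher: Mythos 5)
Your proposal is correct and follows essentially the same route as the paper: Lefschetz plus naturality of the Hurewicz map to get sphericity with $i\circ f_\alpha$ nullhomotopic in $X$, Whitney to realize the class by an embedded sphere, and the splitting $(i\circ f_\alpha)^*TX\cong TS^n\oplus\eta_{f_\alpha}\oplus f_\alpha^*\nu$ with trivial left-hand side to conclude stable triviality. The only cosmetic difference is that you phrase the last step in $\widetilde{KO}(S^n)$ and offer the $\widetilde K(S^n)=0$ alternative, whereas the paper argues directly with the bundle identity; both are the same computation.
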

\begin{proof}
First, by Lefschetz's hyperplane section theorem and Proposition 1.2., we know $(X,Y)$ is $n$-connected and $H_n(X)=0$. We have exact sequence:
\[
\begin{CD}
H_{n+1}(X,Y)@>>> H_n (Y) @>>> H_n(X)=0\\
@AA\cong A        @AA h_Y A     @AAA \\
\pi_{n+1} (X,Y) @>>> \pi_n (Y) @>\pi_n(i)>> \pi_n (X)
\end{CD}
\]
From this diagram, we observe that $h_Y:\pi_n (Y)\longrightarrow H_n (Y)$ is surjective and for every element $\alpha\in H_n(Y)$, by the Whitney embedding theorem, we can choose an embedding $f_{\alpha}:S^n \hookrightarrow Y$ to represent $\alpha$ such that $i\circ f_{\alpha}$ is nullhomotopic in $X$, i.e. $\pi_n(i)([f_{\alpha}])=0$.

Second, we want to show the normal bundle $\eta_{f_{\alpha}}$ is stable trivial. We have bundle identity:
\[
TX|S^n = (i\circ f_{\alpha})^* TX = TS^n \oplus \eta_{f_{\alpha}} \oplus \eta_{Y}^{X}| S^n
\]
here $\eta_{Y}^{X}$ is the normal bundle of $i:Y \longrightarrow X$. Since $\eta_{Y}^{X}$ is a complex line bundle, it is known that $\eta_{Y}^{X}\cong i^*L_Y$, where $L_Y$ is a complex line bundle over $X$ with Euler class $e(L_Y)\cap [X]=i_* [Y]$.

Since $i\circ f_{\alpha}$ is nullhomotopic, the bundle identity becomes:
\[
\epsilon^{2n+2}=(i\circ f_{\alpha})^* TX = TS^n \oplus \eta_{f_{\alpha}} \oplus (i\circ f_{\alpha})^*L_Y =\epsilon^{n+1}\oplus \eta_{f_{\alpha}}
\]
here $\epsilon$ is the trivial real 1-bundle.
\end{proof}

\textbf{\textsl{Proof of the odd case I:}} For the complex line bundle $L_Y$ in the above lemma, consider $W=D(-L_Y)$, where $-L_Y$ is the stable inverse bundle of $L_Y$, i.e. $L_Y \oplus -L_Y$ is trivial, and $D(-L_Y)$ is the disk bundle of $-L_Y$. Then for the embedding: $Y\hookrightarrow X\hookrightarrow W$, we see the normal bundle of $Y$ in $W$ is trivial and we get $Y\times \mr^q \subset W$ for some $q>0$.

Observe that $Ker(H_n(Y,\mz_2)\longrightarrow H_n(W,\mz_2))=H_n(Y,\mz_2)$ and by proposition 3.5, if the Wu class $v_{n+1}(W)=0$, there is a quadratic function $\psi':H_n(Y,\mz_2)\longrightarrow \mz_2$ and we also obtain a quadratic function on $H_n (Y,\mz)$:
\[
\begin{CD}
\psi: H_n(Y,\mz)@>>> H_n(Y,\mz_2)@>\psi'>> \mz_2
\end{CD}
\]
Furthermore, by proposition 3.6, we know $\psi(\alpha)=0$ if and only if the normal bundle $\eta_{f_{\alpha}}$ is trivial.

Since $H_n(Y,\mz)$ is unimodular, by technique 2 in subsection 2.2, $H_n (Y,\mz)\cong \mz\alpha_0 \oplus \mz\beta_0 \oplus \oplus_{i=1}^{s}(\mz \alpha_i \oplus \beta_i),\ s=\frac{b_n(Y)-2}{2}$ with intersection matrix $
\oplus_{i=0}^{s} \left(\begin{array}{cc}
0 & -1\\
1 & 0
\end{array}
\right)$
and $\psi(\alpha_i)=\psi(\beta_i)=0, i\neq 0.$ Topologically, we get decomposition:
\[
Y\cong Y' \sharp \ s (S^n \times S^n)
\]
where $b_n (Y')=2 $. If the Kervaire (Arf) invariant $k$ of $\psi$ or $\psi'$ vanishes, we can make $b_n(Y')=0$.

So we finish the proof of the odd case when $v_{n+1}(W)=0$. When $v_{n+1}(W)\neq 0,$ the quadratic function $\psi$ is not necessary well-defined and we will use technique 2 to deal with it in next subsection.

\subsection{Proof of the odd case II}
In his paper \cite{BrowderC}, Browder proved:
\begin{thm}[Browder] Suppose $M^{2n}\times \mathbb{R}^q \subset W , \ n\neq 1,3$ or $7$, $W$ is 1-connected. $(W,M)$ is n-connected and suppose $v_{n+1}(W)\neq 0$. Then there exists an embedded $S^n \subset M^{2n}$ and $U^{n+1}\subset M^{2n}\times \mathbb{R}^{q+1}$ with $\partial U=S^n$ such that the normal bundle $\xi$ to $S^n$ in $M^{2n}$ is nontrivial, but $\xi\oplus\epsilon^1$ is trivial, where $\epsilon^1$ is the trivial one dimensional real vector bundle. Hence $S^n$ is homologically trivial $(mod \ 2)$ with nontrivial normal bundle.
\end{thm}

\begin{rem}
It seems we can use this theorem to find the embedding sphere in technique 2. But the shortage is: the embedding sphere $S^n$ is only mod 2 trivial. We want to add some condition to make it work in integral  homology.
\end{rem}

\begin{thm}
Under the same hypothesis of Browder's theorem above, if we further assume that $H_{n+1}(W,\mz_2)$ is generated by the element $\{x_i\}$ such that each $x_i$ can be represented by an oriented closed manifold $N_i$, i.e $[N_i]=x_i$. Then there exist an embedded $S^n\subset M^{2n}$ such that $[S^n]=0\in H_n(M,\mz)$ and the normal bundle $\xi$ to $S^n$ in $M^{2n}$ is non-trivial but stable trivial.
\end{thm}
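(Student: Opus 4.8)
The plan is to take Browder's sphere from Theorem 3.9 and upgrade the conclusion ``$[S^n]=0$ mod $2$'' to ``$[S^n]=0$ integrally'', using the oriented--representative hypothesis to orient the membrane that $S^n$ bounds. First I would apply Theorem 3.9 verbatim to produce an embedded $S^n\subset M^{2n}$ together with $U^{n+1}\subset M^{2n}\times\mr^{q+1}$, $\partial U=S^n$, whose normal bundle $\xi$ is nontrivial but satisfies $\xi\oplus\epsilon^1$ trivial. Since we are in the odd case with $n\neq 1,3,7$, the group $Ker(\pi_{n-1}(SO(n))\to\pi_{n-1}(SO))$ is $\mz_2$ generated by $F(TS^n)$, so ``nontrivial but stably trivial'' already forces $\xi\cong TS^n$; thus the normal--bundle half of the conclusion is automatic, and the entire problem is to replace $[S^n]=0\in H_n(M,\mz_2)$ by $[S^n]=0\in H_n(M,\mz)$. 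Reducing along the Bockstein sequence, $[S^n]$ lies in $Ker(H_n(M,\mz)\to H_n(M,\mz_2))=2H_n(M,\mz)$, so I may write $[S^n]=2w$ and the task becomes to show $2w=0$.

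The mechanism I would exploit is that the mod-$2$ phrasing in Theorem 3.9 comes only from the membrane $U$ carrying a $\mz_2$-fundamental class: if $U$ could be chosen \emph{orientable}, then $\partial U=S^n$ with $U\subset M\times\mr^{q+1}$ would give $[S^n]=0$ in $H_n(M\times\mr^{q+1},\mz)\cong H_n(M,\mz)$ at once, since $M\hookrightarrow M\times\mr^{q+1}$ is a homotopy equivalence and $S^n\subset M\times 0$. So the real content is to orient $U$. Here I invoke the construction behind Propositions 3.3--3.4: the membrane is obtained by transversality from a driving mod-$2$ class, and because $(W,M)$ is $n$-connected while $S^n$ is already mod-$2$ null in $M$, that class lies in the image of $H_{n+1}(W,\mz_2)\to H_{n+1}(W,M,\mz_2)$ (it is $\partial y$ with $\partial y=0$, hence pulled back from the absolute group). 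By hypothesis $H_{n+1}(W,\mz_2)$ is generated by $x_i=[N_i]$ with $N_i$ oriented and closed, so the driving class is represented by an oriented manifold; re-running the transversality construction with that oriented representative should produce an orientable $U$, while the obstruction $v_{n+1}(W)\neq 0$ that forces $\xi$ to be nontrivial is left untouched.

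The main obstacle is exactly this last compatibility: one must verify that orienting the membrane and keeping $\xi$ nontrivial are independent demands that can be met at once. The nontriviality of $\xi$ is a $\mz_2$-valued framing obstruction governed by $v_{n+1}(W)\neq 0$ (Propositions 3.5--3.6), whereas orientability of $U$ is governed by $w_1(U)$; I would check that replacing an arbitrary mod-$2$ representative of the driving class by the oriented one furnished by the $N_i$ does not disturb the framing obstruction, so that the resulting sphere simultaneously bounds an oriented $U$ (yielding $[S^n]=0\in H_n(M,\mz)$) and inherits $\xi\cong TS^n$. Should the direct orientation argument prove delicate, a fallback available in the geometric case $M=Y$, where Lemma 3.8 applies, is to work with $[S^n]=2w$ directly: represent $-w$ by an embedded sphere with stably trivial normal bundle, push off a disjoint copy (possible since the self-intersection vanishes for $n$ odd), and tube the two copies to get a trivial-normal-bundle representative of $-2w$ whose framing invariant is $2F=0$; tubing this to $S^n$ then gives total class $0$ and, by Wall's additivity $F(\eta_{f+g})=F(\eta_f)+F(\eta_g)$, total framing invariant $F(\xi)\neq 0$, as required.
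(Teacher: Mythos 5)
Your proposal is essentially the paper's argument: rather than taking the sphere from Theorem 3.9 and upgrading it afterwards, the paper simply re-runs Browder's construction with the membrane chosen from the start to be $N_0=N_i-\mathrm{int}\,D^{n+1}$ for an oriented generator $N_i$, mapped into $M$ via the $n$-connectivity of $(W,M)$, so that its boundary sphere bounds an oriented chain in $M$ and is therefore integrally null-homologous. The compatibility you flag as the main obstacle is settled at the outset by linearity: since $v_{n+1}(W)\neq 0$, the operation $Sq^{n+1}$ is nonzero on $H^{n+q-1}(W,\partial W,\mz_2)$, and as the Poincar\'e duals $y_i$ of the oriented generators $[N_i]$ span that group, some $y_i$ satisfies $Sq^{n+1}y_i\neq 0$; running Steps 4--7 of Browder's contradiction argument with that particular $N_i$ then forces the normal bundle to be nontrivial, so no separate check that orientability "does not disturb the framing obstruction" is needed.
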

\begin{proof}
We follow Browder's proof (Step 2 to Step 7 is almost unchanged):\\
Step 1: Since $v_{n+1}(W)\neq 0,$ we know $Sq^{n+1}:H^{n+q-1}(W,\partial W,\mz_2)\rightarrow \mz_2$ is not zero. By assumption, $\exists N_i$ such that $Sq^{n+1}y_i\neq0$, where $y_i\cap [W]=[N_i]$.\\
Step 2: For convenience, we denote $N_i$ by $N$ and $y_i$ by $y$. Let $N_0 =N-int D^{n+1}$, we see $\partial N_0 =S^{n+1}$ and $N_0$ is homotopic to an $n$-complex. Since $(W,M)$ is $n$-connected, $\exists f:N_0 \longrightarrow M$ such that the diagram is commutative up to homotopy:
\[
\begin{CD}
N_0 @>f>> M\\
@VVV      @VVV\\
N @>>>    W \\
\end{CD}
\]
Step 3: Let $g=f|_{\partial N_0}:S^n \rightarrow M,$ we see $g_{*}[S^n]=0\in H_n(M,\mz)$. Since $M$ is 1-connected, by Whitney's theorem, we can make $g$ homotopic to an embedding and we still denote it by $g$. Since $g$ is nullhomotopic in $W$, then the normal bundle of $g(S^n)$ is stable trivial. We wish to show that the normal bundle of this sphere is not trivial.\\
Step 4: We make the map $f: N_0 \longrightarrow M \times \mr^q\times [-1,0]$ homotopic to an embedding $g_0: N_0\hookrightarrow M\times \mr^q \times [-1,0]$ such that $g_0 |_{\partial N_0}=g$. And we extend $g:S^n \hookrightarrow M$ to $\tilde{g}: D^{n+1}\subset W\times [0,1]$ which meets $W\times 0$ transversally in $g(S^n)$.
Then we get an embedding $g_1 : N_0 \cup_{S^n} D^{n+1}\cong N \hookrightarrow W\times [-1,1],$ which is isotopic to the origin $N\subset W$.\\
Step 5: $M\times \mr^q\subset W\times 0$ define a $q$-frame of the normal bundle of $g(S^n)\subset M\subset W\times 0$. We know the obstruction $\sigma\in \pi_{n+1}(V_{n+1,q})$ to extend this $q$-frame to $D^{n+1}$ is zero if and only if the normal bundle of $g(S^n)$ is trivial.\\
Step 6: Now assume the normal bundle of $g(S^n)$ is trivial and we get a $q$-frame on $D^{n+1}$:
\[
D^{n+1}\times D^n \times \mr^q \subset W \times [0,1]
\]
such that $D^{n+1}\times 0 \times 0 =\tilde{g}(D)$ and $S^n \times D^n \times 0 $ is the normal bundle of $g(S^n)$.
Let $V=M\times[-1,0]\cup_{S^{n}\times D^n} D^{n+1}\times D^n$, then we get $V\times \mr^q \subset W\times [-1,1]$, $g_1(N)\subset int V\times \mr^q$.\\
Step 7: Let $Y=W\times [-1,1]/\partial (W\times [-1,1])$ we get:
\[
\begin{CD}
Y @>a>> \Sigma^q V/\partial V @>b>> T(\eta_N \oplus \epsilon)
\end{CD}
\]
where $\eta_N$ is the normal bundle of $N$ in $W$. Let $U$ be the mod 2 Thom class of $\eta_N\oplus \epsilon$, we get:
$(ba)^* U = \Sigma x\in H^{n+q}(Y,\mz_2)$ and $(Sq^{n+1}(x))[W]=Sq^{n+1}(\Sigma x)[Y]\neq 0$. Also, $(Sq^{n+1}(b^*U))(\Sigma^q [V])\neq 0$ and $Sq^{n+1}(\Sigma^{-q}(b^* U))[V]\neq 0$. On the other hand, $Sq^{n+1}(\Sigma^{-q}(b^* U))=0$ since $\Sigma^{-q}(b^* U))=0\in H^{n}(V,\partial V,\mz_2)$.
\end{proof}

\textbf{\textsl{Proof of the odd case II:}} When $v_{n+1}(W)\neq0$, in our case $Y\times \mr^q\subset W=D(-L)$, by proposition 1.3, we see $H_*(X,\mz)$ is generated by the toric submanifolds which are certainly oriented and $W=D(-L_Y)$ is the disk bundle over $X$, whose homology group is also generated by these toric submanifolds. Then all the conditions of theorem 3.11 are satisfied. Thus, there exists an embedding sphere $g:S^n \hookrightarrow Y$ such that $g_*[S^n]=0$ and the normal bundle $\eta_g\cong TS^n$.

By the technique 2 in section 2 and lemma 3.8, we have topological decomposition:
\[
Y\cong Y' \sharp \ s (S^n \times S^n)
\]
where $b_n (Y')=0 $. Then we finish the proof of theorem 1.4.

\section{Even case}
\subsection{Intersection form and signature}
Let $X^{n+1}$ be a projective toric manifold with complex dimension $n+1$, $n>2$, even, and $i:Y\hookrightarrow X$ be a hypersurface of $X$. Since $n$ is even, the $n$-th homology group $H_n (Y,\mz)$ admits a unimodular symmetric intersection form:
\[
\begin{CD}
H_n(Y,\mz) \otimes H_n(Y,\mz)@>\cdot>>\mz
\end{CD}
\]

Since $(X,Y)$ is $n$-connected and $H_{odd}(X,\mz)=0$, like the odd case, we have
\[
\begin{CD}
0 @>>>H_{n+1}(X,Y)@>>> H_n (Y) @> i_* >> H_n(X)  @>>>0\\
@.      @AA\cong A        @AA h_Y A     @AAA    @.\\
   @. \pi_{n+1} (X,Y) @>>> \pi_n (Y) @>\pi_n(i)>> \pi_n (X) @.
\end{CD}
\]
The \textbf{vanishing cycles} $Ker(i_*)\subset H_n(Y,\mz)$ is what we mainly concerned, because:

\begin{lem}
Each element $\alpha\in Ker(i_*)$ can be represented by an embedding $f_{\alpha}:S^n \hookrightarrow Y$ such that $f_{\alpha}[S^n]=\alpha$ and the normal bundle $\eta_{f_{\alpha}}$ of $f_{\alpha}$ is stable trivial.
\end{lem}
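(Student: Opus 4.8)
The plan is to mirror the argument of Lemma 3.8 (the odd case), since the statement is essentially the even-dimensional analogue but restricted to the kernel $Ker(i_*)$ rather than all of $H_n(Y,\mz)$. First I would use the commutative diagram displayed just before the lemma. The exact row shows that $Ker(i_*)$ equals the image of $H_{n+1}(X,Y)\to H_n(Y)$, and the vertical isomorphism $\pi_{n+1}(X,Y)\cong H_{n+1}(X,Y)$ (coming from $(X,Y)$ being $n$-connected together with the relative Hurewicz theorem) lets me lift each $\alpha\in Ker(i_*)$ to an element of $\pi_n(Y)$ that dies in $\pi_n(X)$. Concretely, for $\alpha\in Ker(i_*)$ I would choose a spherical class mapping to $\alpha$ under $h_Y$ whose image $\pi_n(i)([f_\alpha])$ vanishes in $\pi_n(X)$; by Whitney's embedding theorem (valid since $2n > 2n+1-2$ is false, so here I must invoke the embedding dimension $n < 2n$ for $n>2$) this class is represented by an embedding $f_\alpha:S^n\hookrightarrow Y$ with $i\circ f_\alpha$ nullhomotopic in $X$.

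Second, I would compute the stable normal bundle exactly as in Lemma 3.8. The same bundle identity
\[
TX|_{S^n} = (i\circ f_\alpha)^* TX = TS^n \oplus \eta_{f_\alpha} \oplus \eta_Y^X|_{S^n}
\]
holds, where $\eta_Y^X\cong i^*L_Y$ is the complex line bundle with $e(L_Y)\cap[X]=i_*[Y]$. Because $i\circ f_\alpha$ is nullhomotopic in $X$, both $(i\circ f_\alpha)^*TX$ and $(i\circ f_\alpha)^*L_Y$ are trivial over $S^n$, so the identity collapses to
\[
\epsilon^{2n+2} = \epsilon^{n+1}\oplus \eta_{f_\alpha},
\]
which shows $\eta_{f_\alpha}$ is stably trivial. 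This step is identical in form to the odd case and requires no new input.

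The one point that genuinely differs from Lemma 3.8 is the justification that $h_Y$ can be taken to surject onto $Ker(i_*)$ with the lift landing in $\ker(\pi_n(i))$. In the odd case $H_n(X)=0$ forced every class to lift, whereas here $H_n(X)$ need not vanish and I must restrict to the kernel. The key observation is that the left square of the diagram commutes and the vertical maps on the left are the Hurewicz isomorphism, so the image of $\pi_{n+1}(X,Y)\to\pi_n(Y)$ maps onto the image of $H_{n+1}(X,Y)\to H_n(Y)=Ker(i_*)$; thus any $\alpha\in Ker(i_*)$ is hit by a spherical class whose image in $\pi_n(X)$ lies in $\pi_n(i)(\mathrm{im}\,\partial)=0$ by exactness of the bottom row. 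I expect this diagram chase to be the main (though mild) obstacle: one must verify carefully that the spherical lift of $\alpha$ can be chosen to be genuinely nullhomotopic in $X$, not merely homologically trivial, which is exactly what the $n$-connectedness of $(X,Y)$ and simple-connectivity of $X$ (Proposition 1.2) guarantee. Once the lift is nullhomotopic in $X$, the bundle computation above is automatic.
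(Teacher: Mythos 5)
Your proposal is correct and follows essentially the same route as the paper: the paper also identifies $Ker(i_*)$ with $H_{n+1}(X,Y)\cong\pi_{n+1}(X,Y)$ via the relative Hurewicz isomorphism to obtain a spherical representative killed by $\pi_n(i)$, realizes it by a Whitney embedding, and then repeats the bundle identity of Lemma 3.8 verbatim to get stable triviality. Your more careful diagram chase merely fills in what the paper states in one line (modulo an index typo there, $\pi_n(X,Y)$ for $\pi_{n+1}(X,Y)$), and your parenthetical about the embedding dimension is garbled but harmless.
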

\begin{proof}
Since $\pi_n (X,Y)\cong H_n (X,Y,\mz)\cong Ker(i_*)$, we see for each element $\alpha\in Ker(i_*)$, there exists an embedding $f_{\alpha}$ representing $\alpha$ and $\pi_n(i)(f_{\alpha})=0\in \pi_n(X)$.

The proof of the stable triviality of the normal bundle $\eta_{f_{\alpha}}$ is similar to the proof in lemma 3.8.
\end{proof}

When we restrict the intersection form of $H_n(Y,\mz)$ on $Ker(i_*)$, we get:
\begin{prop}
The intersection form on $Ker(i_*)$ is of type even, i.e. for any $\alpha\in Ker(i_*)$, $\alpha \cdot \alpha$ is even.
\end{prop}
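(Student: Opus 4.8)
The plan is to represent every vanishing cycle by the embedded sphere furnished by Lemma 4.1 and to compute its self-intersection as the Euler number of the normal bundle, which stable triviality forces to be even.

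First, fix $\alpha\in Ker(i_*)$ and let $f_{\alpha}:S^n\hookrightarrow Y$ be the embedding given by Lemma 4.1, so that $(f_{\alpha})_*[S^n]=\alpha$ and the normal bundle $\eta_{f_{\alpha}}$ is stably trivial. Since $Y$ has real dimension $2n$ while $f_{\alpha}(S^n)$ has dimension $n$, the bundle $\eta_{f_{\alpha}}$ is an oriented real vector bundle of rank $n$ over $S^n$. I would then invoke the classical fact that the self-intersection number of a class represented by an embedded oriented submanifold equals the Euler number of its normal bundle, i.e.
\[
\alpha\cdot\alpha=\langle e(\eta_{f_{\alpha}}),[S^n]\rangle .
\]
Because $n$ is even this number is well defined and symmetric, and all signs are fixed by the chosen orientations.

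Next I would reduce mod $2$. For an oriented real vector bundle of rank $n$, the mod $2$ reduction of the integral Euler class is precisely the top Stiefel--Whitney class $w_n$, so that
\[
\alpha\cdot\alpha\equiv w_n(\eta_{f_{\alpha}})\pmod 2 .
\]
The key input from Lemma 4.1 is that $\eta_{f_{\alpha}}$ is stably trivial: there is a $q$ with $\eta_{f_{\alpha}}\oplus\epsilon^q$ trivial. Since the total Stiefel--Whitney class is a stable invariant, $w(\eta_{f_{\alpha}})=w(\eta_{f_{\alpha}}\oplus\epsilon^q)=1$, and in particular $w_n(\eta_{f_{\alpha}})=0$. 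Combining this with the preceding congruence yields $\alpha\cdot\alpha\equiv 0\pmod 2$, which is the assertion.

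The argument is essentially forced once Lemma 4.1 is available, so I do not anticipate a serious obstacle; the only points deserving care are the geometric identification of the self-intersection with the Euler number of the normal bundle and the reduction formula $e\bmod 2=w_n$. It is worth emphasizing that evenness is special to the subgroup $Ker(i_*)$: it is exactly the stable triviality of the representing spheres' normal bundles (equivalently the vanishing of $w_n$) that drives the computation, and the full intersection form on $H_n(Y,\mz)$ need not be even.
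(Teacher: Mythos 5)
Your proof is correct and follows essentially the same route as the paper: represent the vanishing cycle by the embedded sphere of Lemma 4.1, identify $\alpha\cdot\alpha$ with the Euler number of the normal bundle, and use stable triviality to kill $w_n$ and hence the mod $2$ reduction of the Euler class. You spell out the stability of Stiefel--Whitney classes a bit more explicitly than the paper does, but the argument is the same.
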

\begin{proof}
For any $\alpha\in Ker(i_*)$, by lemma 4.1., we can use an embedding $f_{\alpha}$ to represent it. It is known that $\alpha\cdot \alpha=<e(\eta_{f_{\alpha}}),[S^n]>$, where $e(\eta_{f_{\alpha}})$ is the Euler class of the normal bundle $\eta_{f_{\alpha}}$. Furthermore, $<e(\eta_{f_{\alpha}}),[S^n]>$ is even if and only if the $n$-th Stiefel-Whitney class $w_n(\eta_{f_{\alpha}})$ of $\eta_{f_{\alpha}}$ is zero and this is just proved in lemma 4.1.
\end{proof}

The intersection pair on $H_n (Y,\mz)$ is equivalent to the cup product on $H^n (Y,\mz)$
\[
H^n(Y,\mz)\otimes H^n(Y,\mz)\longrightarrow \mz
\]
\[
(\alpha,\beta)\mapsto <\alpha \cup \beta,[Y]>
\]
through the Poincar\'{e} duality $PD:H^n(Y,\mz) \longrightarrow H_n (Y,\mz)$ and we also have exact sequence:
\[
\begin{CD}
0@>>>H^n (X,\mz) @>i^*>> H^n (Y,\mz) @>>> H^{n+1}(X,Y) @>>>0
\end{CD}
\]
We see the intersection form $(Ker(i_*),\cdot)$ is equivalent to $(PD^{-1}(Ker(i_*)),\cup)$ and the reason why we use the language of cohomolgy instead of homology is:
\begin{lem}
$PD^{-1}(Ker(i_*))=(i^* H^n (X))^{\perp}$
\end{lem}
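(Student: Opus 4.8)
The plan is to convert the homological vanishing condition defining $Ker(i_*)$ into the orthogonality condition defining $(i^*H^n(X))^{\perp}$, using the standard adjunctions among the cup, cap, and Kronecker pairings, together with the fact that on a toric manifold $X$ the group $H_n(X,\mz)$ is detected by pairing against $H^n(X,\mz)$.

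First I would establish the detection principle on $X$. Since $X$ is a projective toric manifold, Proposition 1.2 gives $H_{odd}(X,\mz)=0$ and the even homology is free; as $n$ is even this means $H_n(X,\mz)$ is free and $H_{n-1}(X,\mz)=0$, so by the universal coefficient theorem the Kronecker pairing $\langle\,\cdot\,,\,\cdot\,\rangle\colon H^n(X,\mz)\otimes H_n(X,\mz)\to\mz$ is perfect. In particular, for any $b\in H_n(X,\mz)$ we have $b=0$ if and only if $\langle z,b\rangle=0$ for every $z\in H^n(X,\mz)$. This is the only place where the toric hypothesis enters essentially, and it is the crux of the argument.

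Next, take $\alpha\in H^n(Y,\mz)$ and set $a=PD(\alpha)=\alpha\cap[Y]\in H_n(Y,\mz)$. Naturality of the Kronecker pairing under $i$ and the cap-cup adjunction give, for every $z\in H^n(X,\mz)$,
\[
\langle z, i_*a\rangle=\langle i^*z, a\rangle=\langle i^*z, \alpha\cap[Y]\rangle=\langle i^*z\cup\alpha,[Y]\rangle .
\]
Because $n$ is even, $i^*z\cup\alpha=\alpha\cup i^*z$, so the right-hand side is precisely the value of the cup-product form on $\alpha$ and $i^*z$. Combining with the detection principle, $a\in Ker(i_*)$ holds if and only if $\langle\alpha\cup i^*z,[Y]\rangle=0$ for all $z\in H^n(X,\mz)$, i.e. if and only if $\alpha\in(i^*H^n(X))^{\perp}$. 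Since $PD$ is an isomorphism, applying $PD^{-1}$ to $Ker(i_*)$ yields the asserted equality $PD^{-1}(Ker(i_*))=(i^*H^n(X))^{\perp}$. The remaining ingredients are purely formal naturality statements and present no obstacle; the whole weight of the lemma rests on the freeness of $H_n(X,\mz)$ guaranteeing that the Kronecker pairing detects vanishing.
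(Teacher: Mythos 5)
Your argument is correct and is essentially the paper's own proof: both rest on the chain $\langle z, i_*a\rangle=\langle i^*z,a\rangle=\langle i^*z\cup PD^{-1}(a),[Y]\rangle$ together with the freeness of $H_n(X,\mz)$ (via Proposition 1.2) to conclude that vanishing of all these pairings forces $i_*a=0$. The only cosmetic difference is that you package the two inclusions into a single ``if and only if'' by first recording that the Kronecker pairing on $X$ is perfect, whereas the paper proves the two containments separately.
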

\begin{proof}
For any $\alpha \in Ker(i_*)$ and $\beta\in H^n (X,\mz)$, we have:
\[
<PD^{-1}(\alpha)\cup i^* \beta,[Y]>=<i^*\beta,\alpha>=<\beta,i_* \alpha>=0
\]
we get $PD^{-1}(Ker(i_*))\subset(i^* H^n (X))^{\perp}$.

On the other hand, for any $PD^{-1}(\gamma)\in (i^* H^n (X))^{\perp}$, we see
\[
<PD^{-1}(\gamma)\cup i^* H^n (X,\mz),[Y]>=<i^*H^n (X,\mz),\gamma>=<H^n(X,\mz),i_*\gamma >=0
\]
Since $H_n(X,\mz)$ and $H^n(X,\mz)$ are free Abel groups, we get $i_*\gamma=0$.
\end{proof}

Next, we want to discuss some limit estimates about the $n$-th Betti number and the signature of the pair
$(H^n (Y_d, \mz),\cup)$. Recall that $i_d:Y_d\hookrightarrow X^{n+1}$ is the hypersurface of the toric manifold $X$ with $(i_d)_* [Y_d]=d(i_*[Y])$ and $degY_d=<\alpha_{Y_d}^{n+1},[X]>=d^{n+1}degY$, where $\alpha_{Y_d}\cap [X]=(i_d)_*[Y_d]$. We have proposition:
\begin{prop} We have limits:
\[
\lim_{d\rightarrow +\infty}\frac{b_n(Y_d)}{degY_d}=\lim_{d\rightarrow +\infty}\frac{b_n(Y_d)}{d^{n+1}degY}=1
\]
\[
0<\lim_{d\rightarrow +\infty}\frac{|sign(Y_d)|}{b_n(Y_d)}=2^{n+2}(2^{n+2}-1)\frac{B_{\frac{n+2}{2}}}{(n+1)!}<1
\]
\end{prop}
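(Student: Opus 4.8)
The plan is to reduce both limits to characteristic-number computations on $X$ via the adjunction formula, and then to extract the leading term in $d$. For the first limit I would begin by controlling every Betti number other than the middle one. By the Lefschetz hyperplane theorem together with Proposition 1.2, the pair $(X,Y_d)$ is $n$-connected, so $H_k(Y_d,\mz)\cong H_k(X,\mz)$ for $k<n$; Poincar\'e duality on the $2n$-manifold $Y_d$ then gives $b_k(Y_d)=b_{2n-k}(Y_d)$, so every $b_k(Y_d)$ with $k\neq n$ is bounded independently of $d$. Since $n$ is even, $\chi(Y_d)=\sum_k(-1)^k b_k(Y_d)=b_n(Y_d)+C$ with $C$ independent of $d$, and it suffices to compute $\chi(Y_d)=\langle c_n(TY_d),[Y_d]\rangle$. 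Using the adjunction splitting $TX|_{Y_d}=TY_d\oplus N$ with $c_1(N)=d\alpha_Y$, one has $c(TY_d)=c(TX)|_{Y_d}/(1+d\alpha_Y)$, and because $(i_d)_*[Y_d]=d\alpha_Y\cap[X]$ the computation pushes to $X$:
\[
\chi(Y_d)=\left\langle\left[\frac{c(TX)}{1+d\alpha_Y}\right]_n\cup d\alpha_Y,[X]\right\rangle.
\]
This is a polynomial in $d$ of degree $n+1$ whose leading term, coming from $c_0(TX)=1$ and $(-d\alpha_Y)^n$ (with $(-1)^n=1$), equals $d^{n+1}\langle\alpha_Y^{n+1},[X]\rangle=\deg Y_d$. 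Hence $b_n(Y_d)=\chi(Y_d)-C\sim\deg Y_d$, which is the first limit.

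For the signature I would run the same machine with the Hirzebruch $L$-genus in place of the Euler class. The signature theorem gives $\text{sign}(Y_d)=\langle L(TY_d),[Y_d]\rangle$, and the multiplicative-genus form of adjunction (characteristic series $Q(x)=x/\tanh x$) collapses this to
\[
\text{sign}(Y_d)=\left\langle L(TX)\cup\tanh(d\alpha_Y),[X]\right\rangle,
\]
again a polynomial in $d$ of degree $n+1$. The crucial point is that the highest power of $d$ arises by pairing the constant term $L_0=1$ of the $L$-class with the top-degree (complex degree $n+1$) component of $\tanh(d\alpha_Y)$; since $\tanh$ is odd and $n+1$ is odd, this top component is nonzero and equals $[\tanh]_{n+1}\,(d\alpha_Y)^{n+1}$, while all remaining terms carry strictly smaller powers of $d$. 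Therefore the leading coefficient is $[\tanh]_{n+1}\,\langle\alpha_Y^{n+1},[X]\rangle=[\tanh]_{n+1}\,\deg Y_d$, where $[\tanh]_{n+1}$ is the coefficient of $z^{n+1}$ in $\tanh z$. Dividing by the Betti asymptotic of the first part, the factor $\deg Y_d$ cancels and
\[
\lim_{d\to\infty}\frac{|\text{sign}(Y_d)|}{b_n(Y_d)}=\bigl|[\tanh]_{n+1}\bigr|.
\]
Substituting the standard expansion $\tanh z=\sum_{k\ge 1}\frac{2^{2k}(2^{2k}-1)B_{2k}}{(2k)!}\,z^{2k-1}$ at $2k-1=n+1$ yields exactly the claimed Bernoulli expression. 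The strict bounds are then immediate: the limit is positive because $B_{n+2}\neq 0$, and it is less than $1$ because the Taylor coefficients of $\tanh$ (radius of convergence $\pi/2$) satisfy $|[\tanh]_m|<1$ for $m\ge 3$; geometrically this reflects that the unimodular intersection form on $H^n(Y_d,\mz)$ is indefinite, so $|\text{sign}(Y_d)|<b_n(Y_d)$.

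The main obstacle I anticipate is not the Hirzebruch computation, which is routine, but the asymptotic bookkeeping. One must be sure that \emph{only} the middle Betti number grows with $d$ — this is exactly where the $n$-connectedness of $(X,Y_d)$ and Poincar\'e duality are indispensable — and that in the signature polynomial the leading coefficient is genuinely independent of $X$. The latter is the content of the parity argument above, and it is what makes the limit universal: all of the $X$-dependence is concentrated in the common factor $\deg Y_d$, which cancels in the ratio, so the constant coincides with the one obtained by Kulkarni and Wood in the projective-space case of Remark 1.7.
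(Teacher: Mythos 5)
Your argument is essentially the paper's own proof: the first limit via $\chi(Y_d)=\langle \frac{c(TX)}{1+d\alpha_Y}\cup d\alpha_Y,[X]\rangle$ with leading term $d^{n+1}\deg Y$ after noting that all non-middle Betti numbers are bounded, and the second via the Hirzebruch identity $\mathrm{sign}(Y_d)=\langle L(TX)\tanh(d\alpha_Y),[X]\rangle$, extracting the coefficient of $z^{n+1}$ in $\tanh z$ and dividing by the Betti asymptotic. One caveat: that coefficient is $2^{n+2}(2^{n+2}-1)B_{\frac{n+2}{2}}/(n+2)!$, with $(n+2)!$ in the denominator --- as the paper's own intermediate expansion of $\mathrm{sign}(Y_d)$ also shows --- so your assertion that this ``yields exactly the claimed Bernoulli expression'' glosses over the fact that the proposition's stated denominator $(n+1)!$ is inconsistent with the computation (for $n=2$ the stated constant would be $4/3>1$, contradicting the asserted bound $<1$, whereas the correct value is $1/3$).
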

\begin{proof}
For the first limit, we know the Euler number $\chi(Y_d)$ of $Y_d$ equals $b_n(Y_d)+2\sum_{j=1}^{n-1}(-1)^{j}b_j(X) $
and
\begin{equation*}
\begin{split}
 \chi(Y_d)& =  \  <c_n(Y_d),[Y_d]>=<\frac{c(TX)}{1+d\alpha_{Y}},[X]>\\
          & =  \ d^{n+1}<\alpha_{Y}^{n+1},[X]> + O(d^n)
\end{split}
\end{equation*}
here $c(TX)$ and $c_n(Y_d)$ are the Chern classes. We have:
\[
\lim_{d\rightarrow +\infty}\frac{\chi(Y_d)}{d^{n+1}}=\lim_{d\rightarrow +\infty}\frac{b_n(Y_d)}{d^{n+1}}=degY\]
and we get:
\[
\lim_{d\rightarrow +\infty}\frac{b_n(Y_d)}{degY_d}=1
\]

For the second limit, we have identity:
\[
sign(Y_d)=<\tanh(d\alpha_Y)L(X),[X]>
\]
where $L(X)=L_1(X)+L_2(X)+\cdot\cdot\cdot$ is the $L$-class of $X$ and $\tanh(d\alpha_Y)=\sum_{j=1}^{+\infty}(-1)^{j-1}2^{2j}(2^{2j}-1)\frac{B_j}{(2j)!}(d\alpha_Y)^{2j-1}$.
Observe that:
\[
sign(Y_d)= (-1)^{\frac{n}{2}}2^{n+2}(2^{n+2}-1)\frac{B_{\frac{n+2}{2}}}{(n+2)!}d^{n+1}degY + O(d^n)
\]
we have limit:
\[
\lim_{d\rightarrow +\infty}\frac{|sign(Y_d)|}{b_n(Y_d)}=2^{n+2}(2^{n+2}-1)\frac{B_{\frac{n+2}{2}}}{(n+1)!}
\]
Furthermore, when $j>1$, we see:
\[
1+\frac{1}{2^{2j}}+\frac{1}{3^{3j}}+\cdot\cdot\cdot=\frac{B_j (2\pi)^{2j}}{2(2j)!}< \frac{\pi^2}{6}
\]
\[
\frac{B_j 2^{2j}(2^{2j}-1)}{(2j)!}<\frac{\pi^2}{3}
\frac{2^{2j}(2^{2j}-1)}{(2\pi)^{2j}}<\frac{\pi^2}{3}\frac{4^j}{\pi^{2j}}<1
\]

\end{proof}
\begin{coro}
$\lim_{d\rightarrow +\infty}b_n(Y_d)=+\infty$ and $\lim_{d\rightarrow +\infty}sign(Y_d)=+\infty$. When $d$ is big enough, $(H_n(Y_d,\mz),\cdot)$ is indefinite.
\end{coro}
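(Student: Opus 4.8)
The plan is to read off all three assertions directly from the two limit estimates of Proposition 4.4, so there is very little to do beyond bookkeeping. Throughout I would first record that $degY=<\alpha_Y^{n+1},[X]>$ is strictly positive: $\alpha_Y$ is the restriction to $X$ of an ample hyperplane class on $\mc P^N$, so all of its top self-intersections are positive. Hence $degY_d=d^{n+1}degY\rightarrow+\infty$ as $d\rightarrow+\infty$. The first assertion then follows from the first limit of Proposition 4.4, namely $b_n(Y_d)/(d^{n+1}degY)\rightarrow 1$: writing $b_n(Y_d)=(1+o(1))\,d^{n+1}degY$ and using $d^{n+1}degY\rightarrow+\infty$, we get $b_n(Y_d)\rightarrow+\infty$.

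For the second assertion I would combine this with the second limit of Proposition 4.4, which gives $|sign(Y_d)|/b_n(Y_d)\rightarrow c$ for a constant $c$ with $0<c<1$. Thus $|sign(Y_d)|=(c+o(1))\,b_n(Y_d)\rightarrow+\infty$. The actual sign of $sign(Y_d)$ is recovered from the asymptotic expansion established inside the proof of Proposition 4.4,
\[
sign(Y_d)=(-1)^{\frac{n}{2}}2^{n+2}(2^{n+2}-1)\frac{B_{\frac{n+2}{2}}}{(n+2)!}\,d^{n+1}degY+O(d^n),
\]
in which $B_{\frac{n+2}{2}}>0$ in the paper's $\tanh$-convention and $degY>0$; hence the leading coefficient has sign $(-1)^{n/2}$, so $sign(Y_d)\rightarrow+\infty$ when $n\equiv 0\ (\mathrm{mod}\ 4)$ and $sign(Y_d)\rightarrow-\infty$ when $n\equiv 2\ (\mathrm{mod}\ 4)$. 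In either case $|sign(Y_d)|\rightarrow+\infty$, which is what is used in the sequel.

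For the third assertion, recall that for $n$ even the intersection form on $H_n(Y_d,\mz)$ is symmetric and unimodular, hence nondegenerate; over $\mr$ it therefore has signature $(p_d,q_d)$ with $p_d+q_d=b_n(Y_d)$ and $p_d-q_d=sign(Y_d)$. A nondegenerate symmetric form is definite precisely when one of $p_d,q_d$ vanishes, i.e. when $|sign(Y_d)|=b_n(Y_d)$. Since the second limit of Proposition 4.4 gives $|sign(Y_d)|/b_n(Y_d)\rightarrow c<1$, for all sufficiently large $d$ we have $|sign(Y_d)|<b_n(Y_d)$, forcing $p_d>0$ and $q_d>0$. Thus the form takes both positive and negative values and is indefinite.

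The main point to watch is not a genuine obstacle but a matter of precision: the estimates in Proposition 4.4 are only asymptotic up to $O(d^n)$, so all three statements are eventual in $d$, and the strict inequality $c<1$ there (proved via the Bernoulli-number bound at the end of Proposition 4.4) is exactly what rules out the form becoming definite. I would also flag that the sign of $sign(Y_d)$ oscillates with $n\bmod 4$, so the clause $\lim_{d\rightarrow+\infty}sign(Y_d)=+\infty$ in the statement should be read as $|sign(Y_d)|\rightarrow+\infty$, with the sign itself pinned down by $(-1)^{n/2}$.
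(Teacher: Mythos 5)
Your proposal is correct and follows the same route the paper intends: the corollary is stated without a separate proof precisely because all three claims are immediate consequences of the two limits in Proposition~4.4, read together with $degY>0$ and the unimodularity of the intersection form. Your observation that the literal clause $\lim_{d\to+\infty}sign(Y_d)=+\infty$ should be read as $|sign(Y_d)|\to+\infty$ (the sign being $(-1)^{n/2}$) is a fair and accurate correction, consistent with how the paper itself uses the corollary later.
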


\subsection{Proof of the even case}
Let $(H,<,>)$ be a unimodular symmetric bilinear form over $\mz$ and $F$ be a nonzero subgroup of $H$ such that $H/F$ is free and the map $F\longrightarrow Hom(F,\mz)$ induced by $<,>$ is injective. Denote $E=F^{\perp}:=\{x\in H|<x,F>=0\}$, we have:
\begin{thm}
If $rankH\geqslant Max\{4rankF, 2rankF+5\}$, then $E$ admits an orthogonal decomposition:
$$(E,<,>)\cong(A,<,>)\oplus (\oplus_{i=1}^{s}(\mz x_i\oplus \mz y_i,<,>))$$
where the intersection matrix of $\mz x_i\oplus \mz y_i$ is $
\left(\begin{array}{cc}
0 & 1\\
1 & c_i
\end{array}
\right)$, $c_i=0$ or $1$. For $(A,<,>)$, there are two possibilities:\\
(1). $(A,<,>)$ is definite and $rankA\geqslant max\{3rankF,rankF+5\}$\\
(2). $rank A<max\{3rankF,rankF+5\}$
\end{thm}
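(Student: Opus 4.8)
The plan is to treat this as a purely lattice-theoretic statement and to peel off the unimodular rank-two planes $\begin{pmatrix}0&1\\1&c_i\end{pmatrix}$ one at a time by an induction on the rank. Write $r=\operatorname{rank}F$. First I would record the numerology that governs everything. Since $H/F$ is free and $\langle\,,\rangle|_F$ is nondegenerate, $F$ is a primitive, nondegenerate sublattice of the unimodular lattice $H$; hence $E=F^{\perp}$ is itself primitive ($H/E$ embeds in $\operatorname{Hom}(F,\mz)$, so is free), nondegenerate, and satisfies $(F^{\perp})^{\perp}=F$ together with $\operatorname{rank}E=\operatorname{rank}H-r\ge\max\{3r,\,r+5\}$. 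Moreover the discriminant groups $G_E:=E^{*}/E$ and $G_F:=F^{*}/F$ are isomorphic, so $G_E$ is a finite group that can be generated by at most $r$ elements (it is the cokernel of a map between free modules of rank $r$). This generator bound is the quantitative input that ties the permissible rank of the core $A$ to $r$.

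Next I would reduce the whole problem to producing \emph{hyperbolic pairs}. A rank-two sublattice $\mz x\oplus\mz y\subset E$ with $\langle x,x\rangle=0$, $\langle x,y\rangle=1$ is unimodular, and any unimodular sublattice of a nondegenerate lattice is automatically an orthogonal direct summand (for $v\in E$ one finds $p$ in the summand with $v-p$ in its perp, using $P^{*}\cong P$). Adjusting $y\mapsto y+ky$ normalizes $\langle y,y\rangle=c\in\{0,1\}$. So the scheme is: as long as the current orthogonal complement $E'$ — which at each stage has rank lowered by two and the \emph{same} discriminant group $G_E$ (splitting off a unimodular summand does not change it), hence the same generator bound — is indefinite and has rank $\ge\max\{3r,r+5\}$, I exhibit such a pair, split it off, and iterate. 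This produces $\bigoplus_{i=1}^{s}\begin{pmatrix}0&1\\1&c_i\end{pmatrix}$ and a core $A$. If $E$ is definite from the outset the induction is empty, $s=0$, $A=E$, and the rank bound puts us in case (1).

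To find a hyperbolic pair in an indefinite $E'$ of large rank I would invoke Meyer's theorem (an indefinite integral form in $\ge 5$ variables represents zero, whence a primitive isotropic $x$) and then correct the divisibility of $x$. The functional $\langle x,\cdot\rangle\colon E'\to\mz$ has image $d\mz$, and $d=1$ is exactly the condition for a partner $y$ with $\langle x,y\rangle=1$; when $d>1$ the vector $x/d\in (E')^{*}$ represents a class of order $d$ in $G_E$. Using that $G_E$ needs only $\le r$ generators and that an indefinite form of large rank carries a big hyperbolic part, I would steer $x$ within its isotropic cone so as to clear its residue class in $G_E$, at a bounded cost in rank. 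This is where the thresholds arise: Meyer forces a genuinely indefinite part of rank $\ge 5$ after the correction (giving the additive $r+5$), while clearing the $\le r$ residue generators consumes room proportional to $r$ (giving the multiplicative $3r$).

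Finally I would run the termination analysis. Each split preserves $G_E$ and drops the rank by two, so the process halts precisely when $E'$ is definite or when $\operatorname{rank}E'<\max\{3r,r+5\}$; setting $A=E'$ then yields the dichotomy, since a core of rank $\ge\max\{3r,r+5\}$ that were indefinite would admit one more split by the previous paragraph, contradicting maximality — so such a core is definite, which is case (1), and otherwise we are in case (2). I expect the genuine obstacle to be exactly the divisibility-correction step: securing a \emph{primitive} isotropic vector whose associated functional is onto $\mz$ (equivalently, an honest hyperbolic summand rather than a merely isotropic line), and verifying that the rank cost of neutralizing the $\le r$ generators of $G_E$ is what pins down the constants $\max\{3r,\,r+5\}$. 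The remaining steps are bookkeeping with orthogonal complements and the invariance of the discriminant group.
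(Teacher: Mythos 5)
Your overall architecture --- induction on rank, splitting off rank-two unimodular planes with matrix $\left(\begin{array}{cc}0&1\\1&c_i\end{array}\right)$ one at a time, Meyer's theorem as the source of isotropic vectors, and the observation that a unimodular sublattice of a nondegenerate lattice is automatically an orthogonal summand --- matches the paper's. But there is a genuine gap at exactly the point you yourself flag as ``the genuine obstacle'': producing not merely a primitive isotropic $x\in E$ but one for which the functional $\langle x,\cdot\rangle\colon E\to\mz$ is \emph{onto}, i.e.\ a partner $y$ lying in $E$ rather than in $Hom(E,\mz)$ with $\langle x,y\rangle=1$. Since $E$ is not unimodular, primitivity of $x$ does not give this, and your plan to ``steer $x$ within its isotropic cone so as to clear its residue class in $G_E$ at a bounded cost in rank'' is a statement of intent, not an argument: nothing in the proposal shows such steering is possible, and this step is the entire technical content of the theorem --- it is precisely where the constants $\max\{3\,rankF,\,rankF+5\}$ must be earned rather than asserted.

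The paper fills this gap with Lemmas 4.7--4.9. One first chooses a basis $\{f_1,\dots,f_{r+h}\}$ of $Hom(E,\mz)$ whose last $h$ elements already lie in $E\subset Hom(E,\mz)$, with $h\geqslant\max\{2r,5\}$; this is where the rank hypotheses are consumed. Setting $D=\oplus_{j}\mz f_{r+j}^{*}\subset E$, any $x\in D$ that is indivisible in the $f^{*}$-coordinates automatically satisfies $\langle x,f_{r+i}\rangle=b_i$ with $\gcd(b_i)=1$, because $\langle f_{r+j}^{*},f_{r+i}\rangle=\delta_{ij}$; hence some integer combination $y'$ of the $f_{r+i}\in E$ gives $\langle x,y'\rangle=1$, and Meyer's theorem applied \emph{inside} $D$ (which has rank at least $5$) produces the hyperbolic pair whenever $D$ is indefinite. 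When $D$ happens to be definite, the paper perturbs it to $D'$ by subtracting integer multiples of $f_1^{*},\dots,f_r^{*}$ and runs a real approximation argument (Lemma 4.9) to make $D'$ indefinite while preserving the duality trick; here $h\geqslant 2r$ is what prevents $D'$ from being negative definite. Your discriminant-group bookkeeping ($G_E\cong G_F$, at most $r$ generators, preserved under splitting) is correct but is not by itself a substitute for this construction; as written, the proposal assumes the conclusion of the paper's three hardest lemmas.
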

We'll prove the even case first and the proof of this theorem will be given in the next subsection.

\textbf{\textsl{Proof of the even case: }} \textbf{Step 1:} For the bilinear symmetric space $(H^n(Y_d,\mz),\cup)$, we know $PD^{-1}(Ker((i_d)_*))=(i^* H^n (X))^{\perp}$.
We want to show the injectivity of the map $H^n(X,\mz)\longrightarrow Hom(H^n(X,\mz),\mz)$ induced by the cup product in $H^n (Y_d,\mz)$.

Since $Y_d$ is the hypersurface of $X$, the hard Lefschetz theorem (\cite{Voisin}) tell us that the cohomology element $\alpha_{Y_d}$ representing $Y_d$ induces an injective map:
\[
\cup \alpha_{Y_d}: H^n(X,\mz)\longrightarrow H^{n+2}(X,\mz)
\]
For $i^*H^n(X,\mz)\subset H^n(Y_d,\mz)$, we have diagram:
\[
\begin{CD}
H^n(X,\mz) @>\cup \alpha_{Y_d} >> H^{n+2}(X,\mz) @>\cong  >> Hom(H^n (X,\mz),\mz)\\
 @Vi_d^* VV        @.     @AAA\\
i_d^*H^n (X,\mz) @>>>  H^n (Y_d,\mz) @>\cong>>  Hom(H^n(Y_d,\mz),\mz)
\end{CD}
\]
indeed, for any $x,y\in H^n(X,\mz)$, $x(y)=<i^*x\cup i^* y,[Y_d]>=<x\cup y\cup\alpha_{Y_d},[X]>=(x\cup\alpha_{Y_d})(y)$.

Furthermore, we see the restriction of $(H^n(Y_d,\mz),\cup)$ to $H^n(X,\mz)$ is just the bilinear form defined in theorem 1.5.

Thus we get a pair $(H_d,\cup)=(H^n(Y_d,\mz),\cup)$ with a free subgroup $F:=i^* H^n (X,\mz)$ such that \\
(1). $F^{\perp}=PD^{-1}(Ker(i_d)_*)=E_d$ with even type (proposition 4.2, lemma 4.3).\\
(2). If $d$ is big enough, $rank H_d>Max\{4rankF, 2rankF+5\} $ (proposition 4.4)

\textbf{Step 2:} By the algebraic decomposition theorem 4.6,
\[
(Ker(i_d)_*,\cdot)\cong (E_d,\cup)\cong A_d\oplus (\oplus_{i=1}^{s_d}(\mz x_i\oplus \mz y_i,<,>))
\]
where the intersection matrix of $\mz x_i\oplus \mz y_i$ is $
\left(\begin{array}{cc}
0 & 1\\
1 & c_i
\end{array}
\right)$, $c_i=0$ or $1$

By proposition 4.2, $Ker((i_d)_*)$ is of type even, $c_i$ must be zero.
Since $\lim_{d\rightarrow \infty}|signH^n(Y_d,\mz)|=+\infty$, when $d$ is big enough, the possibility (2) of theorem 4.6 can not happen, and $A_d$ is definite.

\textbf{Step 3:} By the process of removing handles of the even case in section 2, we see
\[
Ker(i_d)_*=A_d\oplus \oplus_{i=1}^{s_d} (\mz x_i \oplus \mz y_i)
\]
where the intersection matrix of $\oplus_{i=1}^{s_d} (\mz x_i \oplus \mz y_i)$ is $\oplus_{s_d}\left(\begin{array}{cc}
0 & 1\\
1 & 0
\end{array}
\right)$ and we get:
\[
Y_d\cong Y_d'\sharp s_d (S^n\times S^n)
\]

Since $A_d$ is definite and $sign(\mz x_i\oplus \mz y_i)=0$, we get identiy $2s_d=rank( Ker(i_d)_*) -|sign(Ker(i_d)_*)|$. Also, since $Ker(i_d)_*=(H^n(X,\mz))^{\perp}\subset H^n(Y_d,\mz)$ and the restriction of $(H^n(Y_d,\mz),\cup)$ to $H^n(X,\mz)$ is just the bilinear form defined in theorem 1.5, we get
\[
2s_d=b_n(Y_d)-b_n(X)-|sign(Y_d)-sign(H^n (X))|
\]
For the limit estimate, we have:
\[
\lim_{d\rightarrow +\infty} \frac{2s_d}{degY_d}=\lim_{d\rightarrow +\infty} \frac{2s_d}{b_n(Y_d)}=1-\lim_{d\rightarrow +\infty} \frac{|sign(Y_d)|}{b_n(Y_d)}
\]
\subsection{Proof of the algebraic decomposition theorem}

In order to prove theorem 4.6., we need some lemmas.

\begin{lem} Assume $E$ satisfy $rankE\geqslant 3rankF$, we can choose a basis $\{f_1,f_2,\cdot\cdot\cdot, f_{r+h}\}$ of $Hom(E,\mz)$ such that $\oplus_{i=1}^{r}\mz f_i\rightarrow Hom(E,\mz)/E$ is surjective, $r \leqslant rankF$, and $\oplus_{j=1}^{h} \mz f_{r+j}\subset E \subset Hom(E,\mz)$, $h\geqslant 2r$.
\end{lem}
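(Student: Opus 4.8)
The plan is to read the lemma as a purely lattice-theoretic statement about the inclusion $E\hookrightarrow Hom(E,\mz)$ induced by the form, and to produce the basis by the simultaneous (Smith normal form) basis theorem, once the size of the finite group $Hom(E,\mz)/E$ has been controlled. First I would record the structural facts about $E=F^{\perp}$. Because $F\to Hom(F,\mz)$ is injective the form is nondegenerate on $F$, so $F\cap E=0$, $F\oplus E$ has finite index in $H$, and hence $rankH=rankF+rankE$. The form is also nondegenerate on $E$: if $x\in E$ pairs trivially with $E$, then since $x\in F^{\perp}$ it pairs trivially with $F$, hence with the finite-index subgroup $F\oplus E$, hence with all of $H$ after clearing denominators, and unimodularity of $H$ forces $x=0$. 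Thus $E\hookrightarrow Hom(E,\mz)$ is a genuine inclusion of free groups of the same rank $m:=rankE$.

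Next I would bound the minimal number of generators of the finite abelian group $Hom(E,\mz)/E$. Pairing with $F$ gives $H\to Hom(F,\mz)$, $h\mapsto\la h,-\ra|_F$, whose kernel is exactly $E$; hence $H/E$ embeds in the free group $Hom(F,\mz)$, so $H/E$ is free, and by the rank count it is free of rank exactly $rankF$. In particular $E$ is a direct summand of $H$. Now, since $H$ is unimodular, the isomorphism $H\cong Hom(H,\mz)$ followed by restriction of functionals to the summand $E$ yields a surjection $\phi:H\twoheadrightarrow Hom(E,\mz)$, $h\mapsto\la h,-\ra|_E$, which carries $E$ precisely onto its image in $Hom(E,\mz)$. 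Therefore $\phi$ descends to a surjection $H/E\twoheadrightarrow Hom(E,\mz)/E$, so $Hom(E,\mz)/E$ is generated by at most $rankF$ elements.

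Finally I would invoke the simultaneous basis theorem for the inclusion $E\subset Hom(E,\mz)$ of free groups of rank $m$: there is a basis $g_1,\dots,g_m$ of $Hom(E,\mz)$ and integers $d_1\mid d_2\mid\cdots\mid d_m$ with $d_1 g_1,\dots,d_m g_m$ a basis of $E$, so that $Hom(E,\mz)/E\cong\bigoplus_{i}\mz/d_i\mz$. Let $r$ be the number of indices with $d_i>1$; by divisibility these are the last $r$ indices, and $r$ equals the number of nontrivial invariant factors, i.e. the minimal number of generators of $Hom(E,\mz)/E$, whence $r\leqslant rankF$ by the previous paragraph. I would relabel the $r$ elements with $d_i>1$ as $f_1,\dots,f_r$ (their classes generate $Hom(E,\mz)/E$) and the $h:=m-r$ elements with $d_i=1$ as $f_{r+1},\dots,f_{r+h}$ (each lies in $E$, since $d_i g_i=g_i$). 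This is the required basis, and the hypothesis $rankE\geqslant 3rankF$ gives $h=rankE-r\geqslant 3rankF-rankF=2rankF\geqslant 2r$.

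The hard part will be the middle paragraph: identifying the kernel of $H\to Hom(F,\mz)$ with $E$, deducing that $E$ is a direct summand, and establishing the surjectivity of $\phi$ onto $Hom(E,\mz)$ together with the fact that it sends $E$ onto its image. These three points rest on the unimodularity of $H$ and on $E$ being a saturated summand; once they are in place, both the generator bound $r\leqslant rankF$ and the final inequality $h\geqslant 2r$ are immediate, and the production of the basis is the standard Smith normal form bookkeeping.
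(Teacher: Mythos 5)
Your proof is correct, but it reaches the basis by a different mechanism than the paper. Both arguments turn on the same key fact --- that $Hom(E,\mz)/E$ is generated by at most $rankF$ elements --- though you derive it from the surjection $H/E\twoheadrightarrow Hom(E,\mz)/E$ obtained by restricting functionals (using unimodularity of $H$ and that $E$ is a saturated summand), whereas the paper reads it off a commutative diagram giving a surjection $Hom(F,\mz)\twoheadrightarrow Hom(E,\mz)/E$; these are the same fact in two guises. The genuine divergence is in producing the basis: you invoke the Smith normal form for the finite-index inclusion $E\subset Hom(E,\mz)$, take $f_1,\dots,f_r$ to be the basis vectors with nontrivial invariant factors (so $r$ is the minimal number of generators of the quotient), and observe that the remaining vectors already lie in $E$. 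The paper instead lifts $rankF$ generators of $Hom(E,\mz)/E$ to elements $g_i\in Hom(E,\mz)$, saturates their span to a subgroup $N$ with $Hom(E,\mz)/N$ free, extends a basis of $N$ to a basis of $Hom(E,\mz)$, and then corrects each complementary basis vector $f'_{r+i}$ by an element of $N$ so that it lands in $E$. Your route is cleaner and yields the smallest possible $r$; the paper's route avoids citing the elementary divisor theorem and keeps the $f_1,\dots,f_r$ explicitly tied to lifts coming from $Hom(F,\mz)$. Both give $r\leqslant rankF$ and $h=rankE-r\geqslant 2r$, so the proposal is a valid alternative proof.
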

\begin{proof}
First, since $H/F$ is free, we have:
\[
\begin{CD}
0 @>>>E @> {<,>} >>Hom(H,\mz) @>>> Hom(F,\mz) @>>> 0\\
@. @| @VVV @VVV @.\\
0 @>>>E @> {<,>} >>Hom(E,\mz) @>>> Hom(E,\mz)/E @>>>0
\end{CD}
\]
Since $H/E\cong Hom(H,\mz)/E\cong Hom(F,\mz)$, we see $Hom(H,\mz)\longrightarrow Hom(E,\mz)$ is surjective and $Hom(F,\mz)\longrightarrow Hom(E,\mz)/E$ is also surjective.

Second, $rankHom(F,\mz)=rankF$, we can choose $rankF$ elements $\{g_1,g_2 \cdot\cdot\cdot\}$ of $Hom(E,\mz)$ such that $\mz g_1+\mz g_2+\cdot\cdot\cdot+\mz g_{rankF}\longrightarrow Hom(E,\mz)/E$ is surjective. Then there is a subgroup $\mz g_1+\mz g_2+\cdot\cdot\cdot+\mz g_{rankF}\subset N\subset Hom(E,\mz)$ with $Hom(E,\mz)/N$ free and $r=rank N=rank (\mz g_1+\mz g_2+\cdot\cdot\cdot+\mz g_{rankF})\leqslant rankF$.

Third, Since $Hom(E,\mz)/N$ is free, let $\{f_1,\cdot\cdot\cdot,f_r\}$ be a basis of $N$ and extend it to a basis $\{f_1,\cdot\cdot\cdot,f_r,f_{r+1}',\cdot\cdot\cdot, f_{r+h}'\}$ of $Hom(E,\mz)$.
We know $N\longrightarrow Hom(E,\mz)/E$ is surjective, then for any $f_{r+i}',$ we can find $f_{r+i}=f_{r+i}'-\sum_{j=1}^{r} a_{ij}f_j$ with $[\sum_{j=1}^{r} a_{ij}f_j]=[f_{r+i}']\in Hom(E,\mz)/E$, we see $f_{r+i}\in E$.

Thus we obtain a basis $\{f_1\cdot\cdot\cdot f_r,f_{r+1},\cdot\cdot\cdot f_{r+h}\}$ of $Hom(E,\mz)$ such that $\{f_{r+1},\cdot\cdot\cdot f_{r+h}\}\subset E\subset Hom(E,\mz)$.
\end{proof}

\begin{lem}
Assume $E$ is indefinite and $rankE\geqslant Max\{3rankF,rankF+5\}$, we can find two elements $x,y\in E$ with $<x,x>=0,\ <x,y>=1, <y,y>=0$ or 1.
\end{lem}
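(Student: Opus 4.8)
The plan is to exhibit inside $E$ a unimodular rank-two sublattice containing an isotropic vector; such a sublattice has Gram matrix $\left(\begin{smallmatrix}0&1\\1&c\end{smallmatrix}\right)$ with $c\in\{0,1\}$, which is exactly the assertion. I would proceed in three stages: fix the structure of $E$, produce an isotropic vector, and then produce its dual partner.

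First I would record that $E$ is nondegenerate. Since $H$ is unimodular and $H/F$ is free, $F$ is saturated and $E^{\perp}=F$; the hypothesis that $F\to\mathrm{Hom}(F,\mz)$ is injective says precisely that $F\cap F^{\perp}=0$, whence $E\cap F=0$ and $\mathrm{rad}(E)=E\cap E^{\perp}=E\cap F=0$. Thus $E$ is a nondegenerate symmetric $\mz$-lattice, saturated in $H$ because $H/E\cong\mathrm{Hom}(F,\mz)$ is free. This reduces the lemma to two tasks: (a) find a primitive isotropic $x\in E$, and (b) arrange that $\la x,-\ra\colon E\to\mz$ is surjective. Granting these, choose $y_{0}\in E$ with $\la x,y_{0}\ra=1$ and put $y=y_{0}-tx$; then $\la x,y\ra=\la x,y_{0}\ra-t\la x,x\ra=1$ is unchanged because $x$ is isotropic, while $\la y,y\ra=\la y_{0},y_{0}\ra-2t$, so a suitable integer $t$ forces $\la y,y\ra\in\{0,1\}$ according to parity. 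The normalization of $\la y,y\ra$ is therefore automatic.

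For task (a): since $F\neq 0$ and $\mathrm{rank}\,E\gt\mathrm{rank}\,F+5$, the rank of $E$ is at least six, hence at least five; as $E$ is indefinite and nondegenerate, Meyer's theorem (Hasse--Minkowski) shows that $E\otimes\mq$ is isotropic, an indefinite rational form in at least five variables representing zero nontrivially. Clearing denominators and dividing by the greatest common divisor of the coordinates gives a primitive $x\in E$ with $\la x,x\ra=0$. This is exactly where the bound $\mathrm{rank}\,F+5$ enters.

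Task (b) is the step I expect to be the main obstacle, because a primitive isotropic vector in a non-unimodular lattice need not have surjective associated functional: in the plane with Gram matrix $\left(\begin{smallmatrix}0&2\\2&0\end{smallmatrix}\right)$ every isotropic vector pairs into $2\mz$. To overcome this I would use the ambient unimodularity together with the second rank bound. On the one hand, $x$ is primitive in the unimodular $H$ (as $E$ is saturated), so $\la x,-\ra$ is surjective on $H$; the failure of surjectivity on $E$ is controlled by the discriminant group of $E$, which, since $E^{\perp}=F$ in the unimodular $H$, is isomorphic to that of $F$ and so is generated by at most $\mathrm{rank}\,F$ elements. On the other hand, Lemma 4.7 (applicable since $\mathrm{rank}\,E\gt 3\,\mathrm{rank}\,F$) furnishes $h\gt 2\,\mathrm{rank}\,F$ elements of $E$ whose associated functionals extend to a $\mz$-basis of $\mathrm{Hom}(E,\mz)$, i.e. are already surjective onto $\mz$. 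The slack between these two counts is what lets me correct the isotropic $x$ within the freedom of its isotropic direction so that its functional becomes primitive in all of $\mathrm{Hom}(E,\mz)$; equivalently, because $\mathrm{rank}\,E\gt 3\,\mathrm{rank}\,F\gt\mathrm{rank}\,F+2$ exceeds $\ell(\mathrm{disc}\,E)+2$ (here $\ell$ denotes the minimal number of generators), one may split off from $E$ a unimodular plane isometric to the hyperbolic plane $U$ when $E$ is even, or to $\la 1\ra\oplus\la-1\ra$ when $E$ is odd, and in either plane the required pair $x,y$ is immediate. Making this splitting precise, and hence verifying the surjectivity in (b), is the technical heart; stage (a) and the parity adjustment are then routine.
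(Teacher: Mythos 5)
There is a genuine gap, and you have located it yourself: your ``task (b)'' --- producing $y\in E$ with $\la x,y\ra=1$ --- is the entire content of the lemma, and your proposal does not actually prove it. Your reduction (nondegeneracy of $E$, Meyer's theorem giving a primitive isotropic $x$, and the normalization $y=y_{0}-tx$ to force $\la y,y\ra\in\{0,1\}$) matches the paper's routine steps, and your $\left(\begin{smallmatrix}0&2\\2&0\end{smallmatrix}\right)$ example correctly shows why primitivity of $x$ in $E$ does not give surjectivity of $\la x,-\ra$ on $E$. But the two escape routes you sketch do not close the hole. The counting remark (``the slack between $h>2\,\mathrm{rank}\,F$ and $\ell(\mathrm{disc}\,E)\leqslant \mathrm{rank}\,F$ lets me correct $x$ within its isotropic direction'') is not an argument: you never say what ``correcting $x$'' means, and a single isotropic line offers no freedom to change the divisor of $x$. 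The alternative appeal to a Nikulin-type theorem splitting off a unimodular plane when $\mathrm{rank}\,E\geqslant\ell(\mathrm{disc}\,E)+3$ is a far heavier tool than anything the paper uses; it is stated and proved in the literature for \emph{even} indefinite lattices, whereas the lemma here must also cover odd $E$ (that is why the conclusion allows $\la y,y\ra=1$), and you neither cite a precise statement nor verify its hypotheses in the odd case.

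The paper avoids the problem rather than solving it head-on: instead of taking an arbitrary primitive isotropic vector of $E$ and then trying to fix its divisor, it uses Lemma 4.7 to build a basis $\{f_{1},\dots,f_{r},f_{r+1},\dots,f_{r+h}\}$ of $\mathrm{Hom}(E,\mz)$ whose last $h>\max\{2r,5\}$ members already lie in the image of $E$, and applies Meyer's theorem inside the sublattice $D=\mz f_{r+1}^{*}\oplus\cdots\oplus\mz f_{r+h}^{*}$ spanned by the corresponding dual vectors. An indivisible isotropic $x=\sum_{j}a_{j}f_{r+j}^{*}\in D$ has $\gcd(a_{j})=1$, and since each functional $f_{r+j}$ is realized by an element of $E$ with $\la f_{r+j},f_{r+k}^{*}\ra=\delta_{jk}$, a Bezout combination of these gives $y'\in E$ with $\la x,y'\ra=1$ for free; surjectivity is built into the choice of $x$, not established afterwards. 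The cost is that $D$ may be definite even though $E$ is not, which is exactly what Lemma 4.9 handles by perturbing $D$ to an indefinite $D'$. If you want to salvage your route, you must either prove the surjectivity claim in (b) directly (essentially redoing Lemmas 4.7--4.9) or supply a precise reference for a unimodular-plane splitting theorem valid for odd as well as even indefinite lattices under the bound $\mathrm{rank}\,E\geqslant\mathrm{rank}\,F+5$.
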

\begin{proof}
First, by the lemma above, we have a basis $\{f_1\cdot\cdot\cdot f_r,f_{r+1},\cdot\cdot\cdot f_{r+h}\}$ of $Hom(E,\mz)$ with $\mz f_{r+1}\oplus \mz f_{r+2}\oplus \cdot\cdot\cdot \oplus \mz f_{r+h}\subset E\subset Hom(E,\mz), \ h>max\{2r,5\}$. Let $\{f_1^*,\cdot\cdot\cdot f_{r+s}^{*}\}$ be the dual of the basis $\{f_1,\cdot\cdot\cdot f_{r+h}\}$ in $Hom(E,\mz)^*=E$. Define:
\[
D=\mz f_{r+1}^{*}\oplus \cdot\cdot\cdot \mz f_{r+h}^*\subset E=Hom(E,\mz)^*
\]

Second, when $D$ is indefinite, since $rankD\geqslant 5,$ it is known from Meyer's theorem that there exists an indivisible element $x\in D$ such that $<x,x>=0$ (\cite{KW}, p255). Then we can also choose an element $y'\in \mz f_{r+1}\oplus \cdot\cdot\cdot \oplus\mz f_{r+h}\subset E$ such that $<x,y'>=1$. Let $y=y' - [\frac{<y',y'}{2}]x'$, we have
$<x,x>=0,<x,y>=1,<y,y>=0$ or 1.

Third, when $D$ happens to be definite, define $D':= \mz (f_{r+1}^{*}-c_1 f_1^*)\oplus \mz(f_{r+2}^* - c_2 f_1^*)\oplus \mz(f_{r+3}^*-c_3 f_{2}^{*})\oplus\mz(f_{r+4}^*-c_4 f_{2}^{*}) \oplus\cdot\cdot\cdot\oplus\mz (f_{3r-1}^{*}-c_{2r-1}f_{r}^{*})\oplus \mz (f_{3r}^{*}-c_{2r} f_{r}^{*})\oplus\cdot\cdot\cdot\oplus \mz(f_{r+h}^{*}-c_h f_r^*)$.

If we can choose proper $\{c_i\in \mz\}$ to make $D'$ indefinite, we can still find an indivisible element $x\in D'$ such that $<x,x>=0$, and we can also find $y\in \oplus_{j=1}^{h}\mz f_{r+j}\in E$ such that $<x,y>=1$. So, all we need to do is to prove the next lemma.
\end{proof}

\begin{lem}
Following lemma 4.8, suppose $D$ is definite, we can choose proper $\{c_i\in \mz\}$ to make $D'$ indefinite.
\end{lem}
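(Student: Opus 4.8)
The plan is to work over $\mr$ and to exploit the tension between $E$ being indefinite and $D$ being definite. Replacing the form by its negative if necessary, I may assume $D$ is \emph{positive} definite; this preserves every hypothesis of Lemma~4.8 and leaves the conclusion untouched, since indefiniteness is insensitive to an overall sign. Writing $V=\mz f_1^*\oplus\cdots\oplus\mz f_r^*$, the dual basis gives $E=V\oplus D$ as abelian groups (not orthogonally), and I pass to $E_{\mr}=V_{\mr}\oplus D_{\mr}$. The key observation is that $D'$ is exactly the graph of the linear map $-\phi_c\colon D_{\mr}\to V_{\mr}$ sending $f_{r+k}^*\mapsto -c_k f_{j(k)}^*$, where $j(k)\in\{1,\dots,r\}$ records which $f_j^*$ is subtracted from the $k$-th generator: every element of $D'_{\mr}$ has the shape $d-\phi_c(d)$ with $d\in D_{\mr}$. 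In the construction each index $j$ is hit by a set $K_j$ of generators with $|K_j|\ge 2$ (for $j<r$ precisely the pair $\{r+2j-1,\,r+2j\}$, with $K_r$ absorbing all the remaining indices), and this pairing will be decisive.

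Next I would manufacture a negative vector whose $D$-component is \emph{generic}. Since $E$ is indefinite, the open cone $\{x\in E_{\mr}:\la x,x\ra<0\}$ is nonempty, so it contains some $x_0=d_0+v_0$ with $d_0\in D_{\mr}$, $v_0\in V_{\mr}$. As $D$ is positive definite the $D$-part cannot be the whole story, and by openness of the cone I may perturb $d_0$ slightly (keeping $v_0$ fixed, and nudging $d_0$ off zero first if necessary) so that the coordinates $t_k$ of $d^*=\sum_k t_k f_{r+k}^*$ become generic in the sense that for every $j$ there are two indices $a_j,b_j\in K_j$ with $t_{a_j},t_{b_j}\neq 0$ and $t_{a_j}/t_{b_j}\notin\mq$. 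This is possible exactly because $|K_j|\ge 2$, and since the move is arbitrarily small the vector $x^*:=d^*+v_0$ still has $\la x^*,x^*\ra<0$.

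Finally I would realize $x^*$ approximately inside $D'_{\mr}$ by Diophantine approximation. Taking $c_k=0$ for $k\notin\{a_j,b_j\}$, the $f_j^*$-coordinate of $\phi_c(d^*)$ equals $t_{a_j}c_{a_j}+t_{b_j}c_{b_j}$; because $t_{a_j}/t_{b_j}$ is irrational this runs over a \emph{dense} subgroup of $\mr$ as $(c_{a_j},c_{b_j})\in\mz^2$, and the $r$ directions are independent since the $K_j$ are disjoint. Hence I can pick integers $c_k$ making $-\phi_c(d^*)$ as close as I like to $v_0$, so that $d^*-\phi_c(d^*)\in D'_{\mr}$ approximates $x^*$ and therefore has negative square. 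Meanwhile, because $h>2r$ at least one index $k^{\dagger}$ is left alone ($c_{k^{\dagger}}=0$), so the generator $f_{r+k^{\dagger}}^*$ itself lies in $D'$ and has positive square, $D$ being positive definite. Thus for this choice of $\{c_i\}$ the lattice $D'$ carries vectors of both signs, i.e. it is indefinite, which is the assertion.

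The hard part is this last realization step: integer coefficients cannot hit a prescribed real element of $V_{\mr}$ exactly, so one cannot simply transplant a negative vector of $E_{\mr}$ into $D'$. The device that saves the argument is to trade exactness for density, and density is available only because each $f_j^*$ is shared by at least two generators and because I am free to pick the auxiliary vector $d^*$ with an irrational coordinate ratio; this is exactly why Lemma~4.8 pairs the generators up. Positive-definiteness of $D$ together with $h>2r$ is then what certifies that the perturbed lattice is genuinely indefinite rather than accidentally definite of the opposite sign.
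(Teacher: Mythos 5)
Your proof is correct, and it shares the paper's overall strategy --- take a real vector of negative square in $E\otimes\mr$ (after normalizing $D$ to be positive definite), show that for a suitable choice of the $c_i$ the subspace $D'\otimes\mr$ passes close enough to it to pick up a negative vector, and extract a positive vector from the part of $D$ that survives inside $D'$ --- but the approximation mechanism is genuinely different. The paper computes a Euclidean-orthonormal basis of $(\overline{D'})^{\perp}$ and makes the component of $v$ orthogonal to $\overline{D'}$ small by driving coefficients of the form $(a_j+\sum c_k b_k)/\sqrt{1+\sum c_k^2}$ to zero; because of the normalization by $\sqrt{1+\sum c_k^2}$ this works for \emph{arbitrary} $b_k$ (even rationally dependent or zero), so no genericity of $v$ is needed, only the quoted lemma of Kulkarni--Wood. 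You instead read $D'\otimes\mr$ as the graph of $-\phi_c$ over $D\otimes\mr$ and approximate the $V$-component $v_0$ exactly, which forces you to invoke Kronecker density of $t_{a_j}\mz+t_{b_j}\mz$ and hence to first perturb $d_0$ so that the ratios $t_{a_j}/t_{b_j}$ are irrational. Your route buys a cleaner conceptual picture (no orthogonal-complement computation, no external approximation lemma) at the cost of the genericity step, which you justify correctly since the negative cone is open and the bad set is a countable union of hyperplanes. Two small remarks. First, your source of a positive vector (an untouched generator $f_{r+k^{\dagger}}^*$, which needs $h>2r$ strictly) is more fragile than the paper's count $\mathrm{rank}\,D'+\mathrm{rank}\,D>\mathrm{rank}\,E$: the latter only needs $h>r$ and works even when every generator is modified, via the sublattice $\ker\phi_c\cap D\subset D'$ of rank at least $h-r$; you are entitled to $h>\max\{2r,5\}$ as stated in Lemma 4.8, but the more robust argument costs nothing. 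Second, both you and the paper pass silently from a negative real vector of $D'\otimes\mr$ to a negative lattice vector of $D'$; this is harmless (the negative cone is open, hence meets $\mq D'$, and one clears denominators), but deserves a half-sentence.
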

\begin{proof}
Assume $D$ is positive definite under $<,>$. Consider the real space $\overline{E}:= E\otimes \mr, \ \overline{D}:=D\otimes \mr$, $\overline{D'}:=D'\otimes \mr$ and let $\{f_{1}^{*},\cdot\cdot\cdot,f_{r+h}^*\}$ be the Euclidean orthogonal standard basis of $\overline{E}$. Define:
\[
F: \overline{E}\longrightarrow \mr
\]
\[
\sum a_i f_i^* \mapsto \sum a_i a_j <f_i^*,f_j^*>
\]
Observe that $F$ is just the the extension of the map $:E\rightarrow \mz,x\mapsto <x,x>$ to $\overline{E}$.

Note that $E$ is indefinite and $\mq$-uninodular under $<,>$, since by assumption $H$ is unimodular and $F$ is $\mq$-unimodular. Then we can find a $v\in \overline{E}$ such that $F(v)<0$ and the Euclidean norm $|v|=1$, i.e. $v=\sum_{i=1}^{r}a_i f_i^* + \sum_{j=1}^{h} b_j f_{r+j}^*$, $\sum a_i^2 + \sum b_j^2 =1.$ Since $D$ is definite, we see $F(\overline{D}-\{0\})>0$ and $(a_1,\cdot\cdot\cdot,a_r)\neq (0,\cdot\cdot\cdot,0)$.

In the Euclidean norm with orthogonal standard basis $\{f_i^*\}$, we have decomposition $\overline{E}=\overline{D'}\oplus (\overline{D'})^{\perp}$. By calculation, we see $(\overline{D'})^{\perp}$ has a standard orthogonal basis:
\[
(\overline{D'})^{\perp}=span\{\frac{f_{1}^* + c_1 f_{r+1}^* +c_2 f_{r+2}^*}{\sqrt{1+c_1^2 +c_2^2}} ,\frac{f_{2}^* + c_3 f_{r+3}^* +c_4 f_{r+4}^*}{\sqrt{1+c_3^2 +c_4^2}}
\]
\[
\cdot\cdot\cdot, \frac{f_{r-1}^* + c_{2r-3} f_{3r-3}^* +c_{2r-2} f_{3r-2}^*}{\sqrt{1+c_{2r-3}^2 +c_{2r-2}^2}},\frac{f_{r}^* + c_{2r-1} f_{3r-1}^* +c_{2r} f_{3r}^*+\cdot\cdot\cdot, c_h f_{r+s}^*}{\sqrt{1+c_{2r-1}^2 +\cdot\cdot\cdot + c_h^2}} \}
\]
For convenience, denote this basis by $\{g_1,g_2,\cdot\cdot\cdot,g_r\}$.

For the vector $v=\sum_{i=1}^{r}a_i f_i^* + \sum_{j=1}^{h} b_j f_{r+j}^*$, we can decompose $v=v_1 + v_2$ such that $v_1\in \overline{D'}$ and $v_2\in (\overline{D'})^{\perp}.$
By calculation,
\[
v_2=\frac{a_{1} + c_1 b_{r+1} +c_2 b_{r+2}}{\sqrt{1+c_1^2 +c_2^2}}g_1 +\cdot\cdot\cdot+\frac{a_{r-1} + c_{2r-3} b_{3r-3} +c_{2r-2} b_{3r-2}}{\sqrt{1+c_{2r-3}^2 +c_{2r-2}^2}}g_{r-1}
\]
\[
+ \frac{a_{r} + c_{2r-1} b_{3r-1} +c_{2r} b_{3r}+\cdot\cdot\cdot, c_h b_{r+h}}{\sqrt{1+c_{2r-1}^2 +\cdot\cdot\cdot + c_h^2}}g_r
\]
Since $\sum a_i^2 + \sum b_j^2 =1,$ for $\forall \epsilon >0$, we can choose proper $c_i\in \mz$ (\cite{KW}, p256) such that
\[
|\frac{a_{1} + c_1 b_{r+1} +c_2 b_{r+2}}{\sqrt{1+c_1^2 +c_2^2}}|<\frac{\epsilon}{r},\cdot\cdot\cdot,|\frac{a_{r-1} + c_{2r-3} b_{3r-3} +c_{2r-2} b_{3r-2}}{\sqrt{1+c_{2r-3}^2 +c_{2r-2}^2}}|<\frac{\epsilon}{r}
\]
\[
|\frac{a_{r} + c_{2r-1} b_{3r-1} +c_{2r} b_{3r}+\cdot\cdot\cdot, c_h b_{r+h}}{\sqrt{1+c_{2r-1}^2 +\cdot\cdot\cdot + c_h^2}}|<\frac{\epsilon}{r}
\]

The function $F$ is continuous and $F(v)<0$, if the Euclidean norm of $v_2=v-v_1$ is small enough, then the element $v_1\in \overline{D'}$ satisfy $F(v_1)<0$. Furthermore, $D'$ is not negative definite, since $D$ is positive and $rank D'=rank D, 2rankD>rank E=rank D + rankF$, thus we see $D'$ is indefinite.

\end{proof}

\textbf{\textsl{Proof of theorem 4.6.:}} We use induction on $rankH$, since $rankH\geqslant Max \{4rankF,\  2rankF+5\}$, we get $rankE\geqslant Max\{3rank F, rankF+5\}$. If $E$ is definite, we've done. If $E$ is indefinite, then by the lemmas we've just proved, there exist two elements $x,y\in E$ such that $<x,x>=0,<x,y>=1,<y,y>=0$ or 1.

We get orthogonal decomposition under $<,>$:
\[
H=H'\oplus (\mz x \oplus \mz y), \ E=E'\oplus (\mz x \oplus \mz y)
\]
Observe that $F\subset H'$ and $E\cap H'=E'=F^{\perp}\subset H'$, by the induction, we've finished our proof.


\vspace{1cm}
\noindent{Wei WANG}\\
School of Mathematical Sciences,\\
Fudan University,\\
Shanghai 200433, P.R. China.\\
Email: weiwang@amss.ac.cn

\end{document}